\newcommand{\dd}{\textup{d}}
\def\eps{\varepsilon}
\def\E{\mathbb{E}}
\def\P{\mathbb{P}}
\def\R{\mathbb{R}}
\def\L{\mathcal{L}}
\newcommand{\deuc}{L_{\textup{euc}}}
\newcommand{\drie}{L_{\textup{rie}}}
\newtheorem{theorem}{Theorem}
\newtheorem{proposition}[theorem]{Proposition}
\theoremstyle{plain}
\theoremstyle{remark}
\theoremstyle{definition}
\newtheorem*{definition*}{Definition}
\begin{document}


\title{Extreme hitting probabilities for diffusion}


\author{Samantha Linn\thanks{Department of Mathematics, University of Utah, Salt Lake City, UT 84112 USA.} \and Sean D. Lawley\thanks{Department of Mathematics, University of Utah, Salt Lake City, UT 84112 USA (\texttt{lawley@math.utah.edu}). SDL was supported by the National Science Foundation (Grant Nos.\ DMS-1944574 and DMS-1814832).}
}
\date{\today}
\maketitle

\begin{abstract}
A variety of systems in physics, chemistry, biology, and psychology are modeled in terms of diffusing ``searchers'' looking for ``targets.'' Examples range from gene regulation, to cell sensing, to human decision-making. A commonly studied statistic in these models is the so-called hitting probability for each target, which is the probability that a given single searcher finds that particular target. However, the decisive event in many systems is not the arrival of a given single searcher to a target, but rather the arrival of the fastest searcher to a target out of many searchers. In this paper, we study the probability that the fastest diffusive searcher hits a given target in the many searcher limit, which we call the extreme hitting probability. We first prove an upper bound for the decay of the probability that the searcher finds a target other than the closest target. This upper bound applies in very general settings and depends only on the relative distances to the targets. Furthermore, we find the exact asymptotics of the extreme hitting probabilities in terms of the short-time distribution of when a single searcher hits a target. These results show that the fastest searcher always hits the closest target in the many searcher limit. While this fact is intuitive in light of recent results on the time it takes the fastest searcher to find a target, our results give rigorous, quantitative estimates for the extreme hitting probabilities. We illustrate our results in several examples and numerical simulations.
\end{abstract}

\section{Introduction}

Many systems in physics, chemistry, biology, and psychology have been modeled in terms of diffusing ``searchers'' finding ``targets'' \cite{redner2001}. Examples include diffusion-limited chemical reactions \cite{hanggi1990}, immune response initiation from a T cell finding an antigen-presenting cell in a lymph node \cite{Delgado2015}, gene activation from a transcription factor finding the corresponding gene \cite{Larson2011}, and making a decision when the amount of evidence in favor of a certain response surpasses a given threshold \cite{ratcliff2016}.

To understand the timescales in these systems, one often studies the time it takes a searcher to find a target, which is called the first passage time (FPT). If there are multiple targets, then another important quantity is the probability that a searcher finds a particular target, which is called the hitting probability or splitting probability \cite{condamin2007}. 
For example, cells sense their environment through diffusive signals (searchers) arriving at membrane receptors (targets), and the receptor hitting probabilities have been used to study how cells could infer the location of the source of the signal \cite{lawley2020prl} (intuitively, if most of the diffusive signal hits receptor $k$, then the source is likely near that receptor). As another example, decision-making has long been modeled in the psychology literature in terms of a diffusive searcher moving between targets which represent choices for the decision \cite{ratcliff1978}. In these models, the hitting probabilities thus describe the likelihood that a particular decision will be made \cite{ratcliff2008}. 

To describe these scenarios more precisely, let $\{X(t)\}_{t\ge0}$ denote the path of a diffusive searcher among $m\ge2$ targets denoted by $V_{0},V_{1},\dots,V_{m-1}$. The FPT of the searcher to one of the targets is then
\begin{align}\label{tau0}
\tau
:=\inf\{t>0:X(t)\in\cup_{k=0}^{m-1}V_{k}\}.
\end{align}
If $\kappa\in\{0,1,\dots,m-1\}$ denotes the index of the target hit by the searcher (i.e.\ $\kappa=k$ if $X(\tau)\in V_{k}$), then the hitting probabilities for the $m$ targets are the values of
\begin{align}\label{hit0}
\P(\kappa=k)\quad \text{for }k\in\{0,1,\dots,m-1\}.
\end{align}

Mathematically, finding the hitting probabilities in \eqref{hit0} for a single diffusive searcher requires solving an elliptic partial differential equation (PDE) with mixed boundary conditions. To illustrate, consider a purely diffusive searcher in a bounded domain $M\subset\R^{d}$ with a reflecting boundary containing $m\ge2$ targets $V_{0},V_{1},\dots,V_{m-1}\subset M$ (see Figure~\ref{figschem0} for an illustration). Conditioned that the searcher starts at $x_{0}\in M$, the probability that the searcher hits target $k$ first,
\begin{align*}
\pi(x_{0})
:=\P(\kappa=k\,|\,X(0)=x_{0}),
\end{align*}
satisfies the PDE boundary value problem \cite{oksendal2003},
\begin{align}\label{pde}
\begin{split}
\Delta \pi
&=0,\quad x_{0}\in M\backslash \cup_{j=0}^{m-1}V_{j},\\
\pi
&=1,\quad x_{0}\in V_{k},\\
\pi
&=0,\quad x_{0}\in\cup_{j\neq k}V_{j},
\end{split}
\end{align}
with reflecting boundary conditions on the boundary of $M$ (if the searcher experiences drift or a space-dependent diffusion coefficient, then the Laplacian $\Delta$ in \eqref{pde} is replaced by a more complicated differential operator \cite{oksendal2003}). Hence, finding the hitting probabilities in \eqref{hit0} generally amounts to solving a PDE akin to \eqref{pde}. Finding moments of the FPT $\tau$ in \eqref{tau0} involves solving similar PDEs to \eqref{pde} \cite{gardiner2009}. Analyzing such FPT and hitting probability problems has generated a great deal of PDE analysis \cite{cheviakov11, ward10, ward10b, chen2011, holcman2014, grebenkov2016, lindsay2017, lawley2019dtmfpt}.

The majority of prior studies of diffusive search have considered a single searcher. However, it has recently been emphasized that the important timescale in many systems is not the FPT of a single searcher, but rather the fastest FPT out of many searchers \cite{schuss2019, lawley2020uni}. That is, if there are $N\gg1$ searchers with respective FPTs $\tau_{1},\dots,\tau_{N}$, then the decisive timescale is the so-called fastest FPT or extreme FPT,
\begin{align}\label{TN0}
T_{N}
:=\min\{\tau_{1},\dots,\tau_{N}\}.
\end{align}
As two examples, human fertilization is triggered when the fastest sperm cell out of $N\approx10^{8}$ sperm cells finds the egg \cite{meerson2015}, and a gene regulatory response is determined by only the fastest few transcription factors out of $N\in[10^{2},10^{4}]$ transcription factors relaying the signal \cite{harbison2004}. For more examples, see the review \cite{schuss2019} and the subsequent commentaries \cite{coombs2019, tamm2019, martyushev2019, rusakov2019, sokolov2019, redner2019, basnayake2019c}.

\begin{figure}
  \centering
             \includegraphics[width=0.5\textwidth]{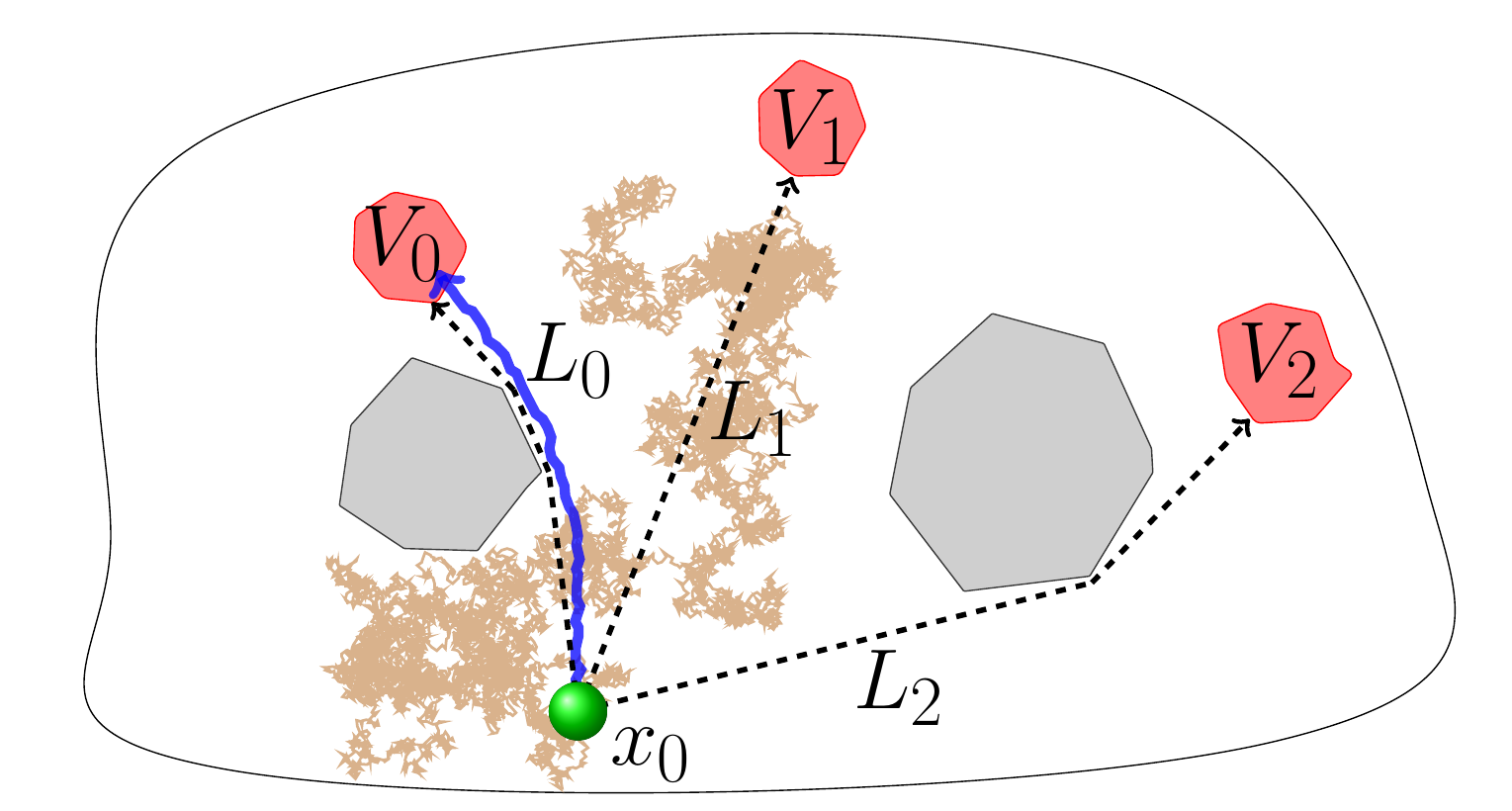}
 \caption{Schematic diagram of diffusive search. The searchers start at the green ball labeled $x_{0}$ and diffuse until they hit one of the $m=3$ targets (red regions) labeled $V_{0}$, $V_{1}$, and $V_{2}$. The dashed lines with lengths $L_{0}<L_{1}<L_{2}$ show the shortest paths to each of the targets which avoid the reflecting obstacles (gray regions). The brown trajectory depicts the path of a typical searcher which wanders around the domain before finding a target. The blue trajectory depicts the path of the fastest searcher out of $N\gg1$ searchers which tends to follow the shortest path to the closest target.}
 \label{figschem0}
\end{figure}

In this case of $N\gg1$ searchers, a statistic related to the extreme FPT in \eqref{TN0} is what we call the extreme hitting probability. More precisely, let $\kappa_{n}\in\{0,\dots,m-1\}$ indicate the target hit by the $n$th searcher. If $n^{*}\in\{1,\dots,N\}$ denotes the index of the fastest searcher (meaning $\tau_{n^{*}}=T_{N}$), then let
\begin{align*}
K_{N}
=\kappa_{n^{*}}\in\{0,\dots,m-1\}
\end{align*}
indicate the target hit by this fastest searcher. The extreme hitting probabilities are then
\begin{align*}
\P(K_{N}=k)\quad\text{for }k\in\{0,1,\dots,m-1\}.
\end{align*}
In the cell sensing model described above \cite{lawley2020prl}, the extreme hitting probabilities describe the distribution of where the first signaling molecules are likely to hit the cell. In decision-making models, the extreme hitting probabilities describe choices made by early adopters, which can affect the subsequent decisions made by a larger population \cite{mann2018, mann2020, karamched2020}.

In this paper, we study the extreme hitting probabilities for $N\gg1$ independent and identically distributed (iid) diffusive searchers.  If $0$ denotes the index of the target closest to the searcher starting location(s), then we prove that the probability that the fastest searcher finds target $k\neq0$ vanishes according to
\begin{align}\label{vanishlog}
\P(K_{N}=k)
=o(N^{1-(L_{k}/L_{0})^{2}+\eps})\quad\text{as }N\to\infty\text{ for any $\eps>0$},
\end{align}
where $f=o(g)$ denotes $f/g\to0$. In \eqref{vanishlog}, $L_{j}>0$ denotes a certain geodesic distance between the searcher starting locations and target $j\in\{0,\dots,m-1\}$, and we assume $L_{0}<L_{k}$. Roughly speaking, $L_{j}$ is the shortest distance the searcher must travel to hit target $j$, as illustrated in Figure~\ref{figschem0} (the geodesic distance is given precisely in section~\ref{general}). We prove that \eqref{vanishlog} holds in quite general settings, including for $d$-dimensional diffusion processes (i) with space-dependent diffusion coefficients and drifts, (ii) on Riemannian manifolds, (iii)
with reflecting obstacles, and (iv) with partially absorbing targets.

Moreover, the result in \eqref{vanishlog} can be sharpened under additional assumptions on the short-time behavior of the joint probability distribution of $(\tau,\kappa)$ for a single searcher. In particular, we prove that
\begin{align}\label{vanish0}
\P(K_{N}=k)
\sim{{\eta}} (\ln N)^{\rho}N^{1-(L_{k}/L_{0})^{2}}\quad\text{as }N\to\infty\quad\text{for }k\neq0,
\end{align}
where the constant ${{\eta}}>0$ and the logarithmic power $\rho\in\R$ are given explicitly in terms of parameters in the short-time distribution of $(\tau,\kappa)$. Throughout this paper, $f\sim g$ denotes $f/g\to1$.

The results in \eqref{vanishlog}-\eqref{vanish0} show that the fastest searcher always hits the closest target in the limit of many searchers. While this fact is intuitive in light of recent results on extreme FPTs \cite{lawley2020uni}, the bound in \eqref{vanishlog} and the exact asymptotics in \eqref{vanish0} give rigorous, quantitative estimates for the extreme hitting probabilities. We now highlight two features of these estimates.

First, \eqref{vanishlog} is a general result that requires knowing merely the distance to the closest target, $L_{0}$, and the distance to the $k$th target, $L_{k}$ (see Figure~\ref{figschem0}). In particular, one can use \eqref{vanishlog} to estimate the extreme hitting probabilities without detailed knowledge of the geometry, diffusivity, drift, etc. This is in stark contrast to obtaining the hitting probabilities for a single searcher, which requires solving an elliptic PDE with mixed boundary conditions as in \eqref{pde}.

Second, \eqref{vanishlog}-\eqref{vanish0} show how relatively small differences in target distances yield vastly different extreme hitting probabilities. For example, consider $N$ iid searchers which move by pure diffusion in one dimension between a target at $x=0$ and a target at $x=l>0$ (see the left panel of Figure~\ref{fig1dintro}). If the searchers start in the left half of the interval, $x_{0}\in(0,l/2)$, then the respective distances to each target are simply
\begin{align*}
L_{0}
=x_{0}
<L_{1}
=l-x_{0}.
\end{align*}
In the case of a single searcher ($N=1$), it is well-known that the probability that the searcher hits $x=l$ before $x=0$ is a linear function of the starting location \cite{oksendal2003},
\begin{align*}
\P(K_{1}=1)
=\P(\kappa=1)
=\frac{x_{0}}{l}
=\frac{1}{1+L_{1}/L_{0}}.
\end{align*}
This means that if a searcher starts only slightly closer to $x=0$ than $x=l$, then that single searcher is only slightly more likely to hit $x=0$ before $x=l$. However, if there are $N\gg1$ such searchers, then we apply \eqref{vanish0} to this example and find that
\begin{align}\label{est0}
&\P(K_{N}=1)
\sim
{{\eta}}
(\ln N)^{\rho}
N^{1-{\beta}}
\quad\text{as }N\to\infty,
\end{align}
where 
\begin{align*}
{\beta}
&=\Big(\frac{L_{1}}{L_{0}}\Big)^{2}
>1,\quad 
\rho=\frac{\beta-1}{2},\quad
{{\eta}}
=\sqrt{\pi^{\beta-1}\beta}\Gamma(\beta)
>0.
\end{align*}
In the right panel of Figure~\ref{fig1dintro}, we plot the estimate in \eqref{est0} against numerical simulations, which illustrates the rapid decay of $\P(K_{N}=1)$ even if $L_{0}$ is only slightly less than $L_{1}$. See section~\ref{ex1d} for details on this example.

\begin{figure}
  \centering
                 \includegraphics[width=0.465\textwidth]{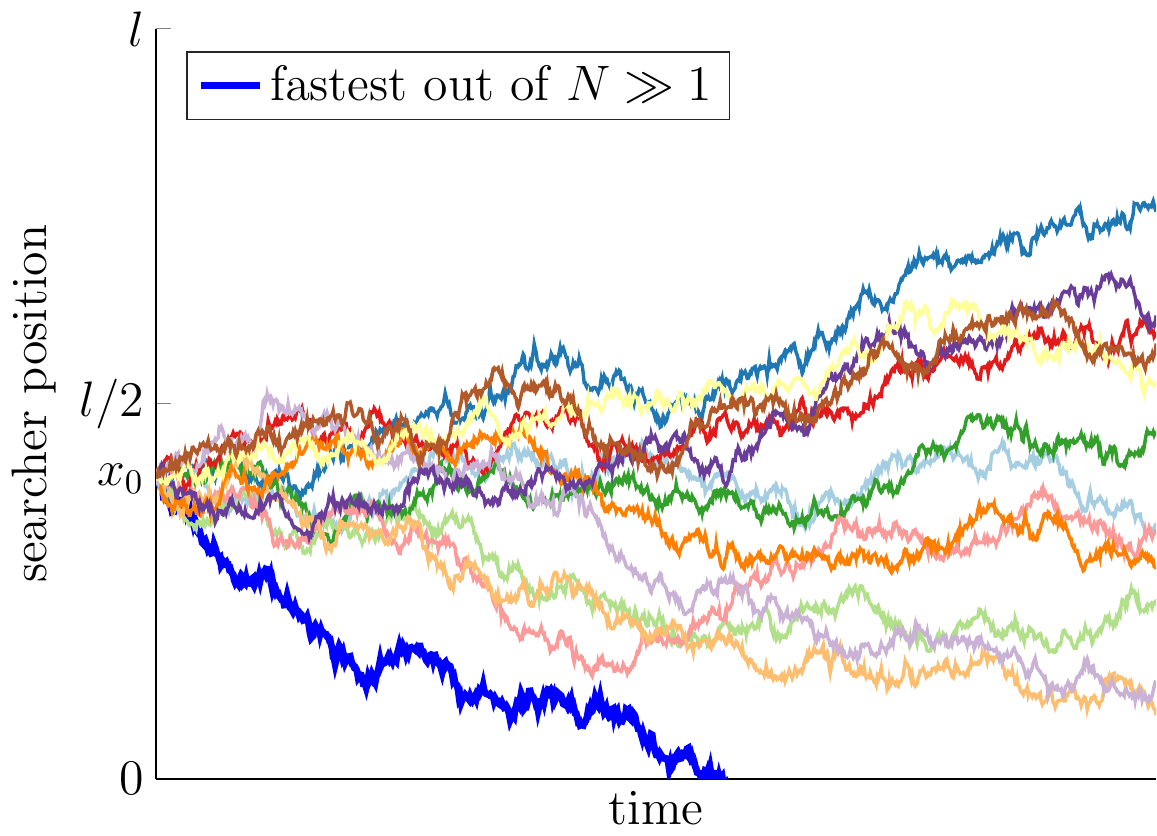}
                              \qquad
             \includegraphics[width=0.465\textwidth]{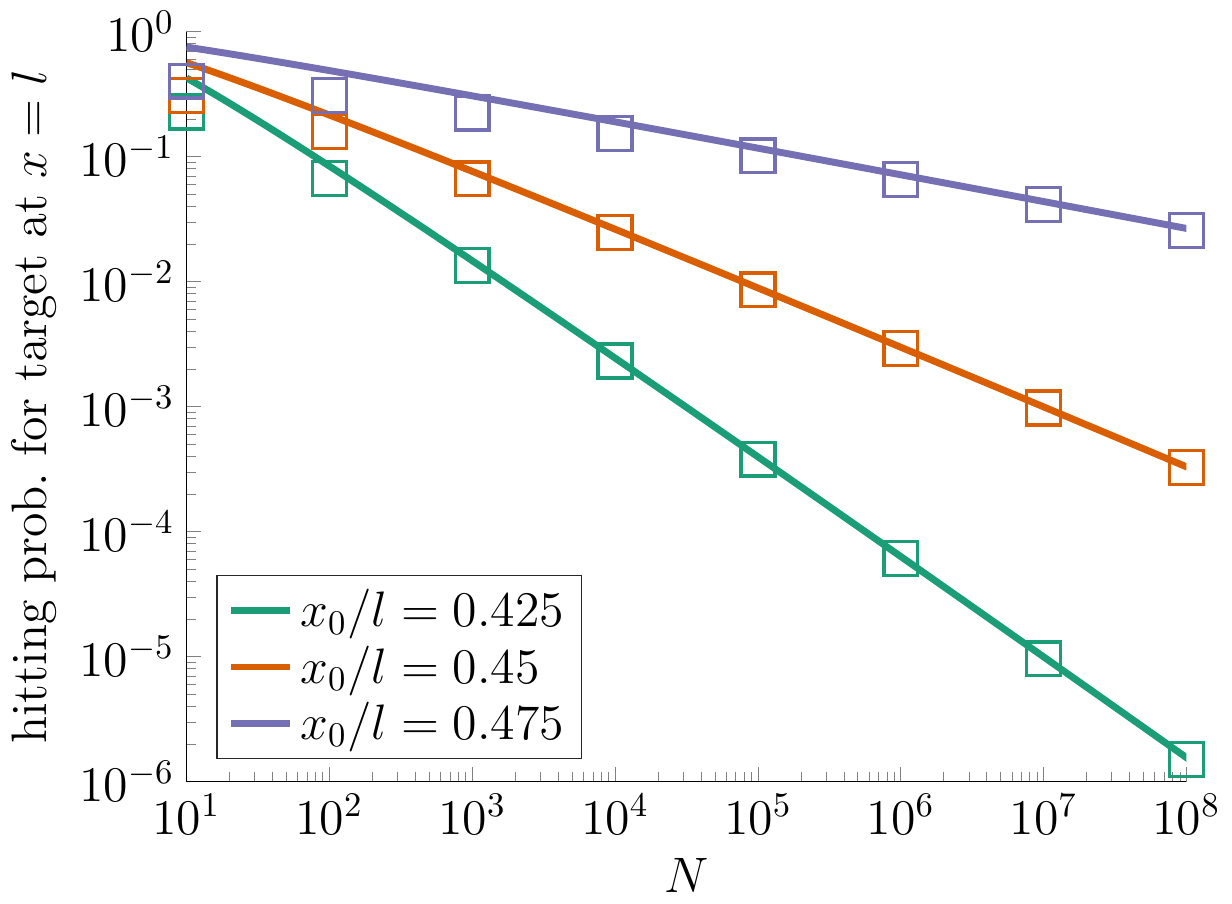}
 \caption{Diffusive search in the interval $(0,l)$. \textbf{Left:} The thin curves depict paths of typical searchers which wander around the interval and the thick blue curve depicts the path of the fastest searcher which quickly hits the closer target. \textbf{Right:} The curves plot the asymptotic estimate in \eqref{est0} for the probability that the fastest searcher out of $N$ searchers hits the target at $x=l$ for different values of the relative searcher starting location $x_{0}/l$. The square markers plot the value of this extreme hitting probability obtained from numerical simulations. See section~\ref{ex1d} for details.}
 \label{fig1dintro}
\end{figure}

The rest of the paper is organized as follows. In section~\ref{math}, we first  represent the extreme hitting probabilities as an integral involving the probability distribution of $\tau$ and the joint probability distribution of $(\tau,\kappa)$. We then find the large $N$ asymptotics of this integral under some assumptions on the short-time behavior of these probability distributions. In section~\ref{examples}, we illustrate the exact asymptotic estimate in \eqref{vanish0} in several examples and compare to numerical simulations. In section~\ref{general}, we prove that the bound in \eqref{vanishlog} holds in several very general settings. We conclude by discussing related work. We present the mathematical proofs along with some technical points in the appendix.

\section{Hitting probability asymptotics}\label{math}

In this section, we prove results on the asymptotics of hitting probabilities under general assumptions on the short-time behavior of hitting time distributions. The theorems in this section make no reference to diffusion. Rather, the theorems merely assume certain short-time behavior for hitting time distributions. We then show in sections~\ref{examples} and \ref{general} that this behavior is characteristic of diffusive search.

\subsection{Probabilistic setup and integral representation}\label{splitting}

Let $\tau>0$ be a nonnegative random variable and let $\kappa$ be a random variable taking values in the finite set $\{0,1,\dots,m-1,\infty\}$. In the applications of interest, $\tau$ is the FPT of a searcher to a target and $\kappa$ indicates which of the $m\ge2$ targets that the searcher finds. We set $\kappa=\infty$ if $\tau=\infty$, which describes the event that the searcher never finds a target (the event $\tau=\infty$ occurs with positive probability in, for example, diffusive search in an unbounded domain in dimension $d\ge3$).

For each target index $k\in\{0,\dots,m-1\}$, define
\begin{align*}
F_{k}(t)
:=\P(\tau\le t \cap \kappa=k),\quad k\in\{0,\dots,m-1\},\,t\in\R.
\end{align*}
Furthermore, let $F(t)$ denote the cumulative distribution function of $\tau$,
\begin{align*}
F(t)
:=\P(\tau\le t)
=\sum_{k=0}^{m-1}F_{k}(t),\quad t\in\R.
\end{align*}
We assume $F(t)$ is a continuous function with $F(0)=0$, which ensures that $\P(\tau=t)=0$ for every $t\in\R$.

Let $\{(\tau_{n},\kappa_{n})\}_{n\ge1}$ be an iid sequence of realizations of
\begin{align*}
(\tau,\kappa)\in(0,\infty]\times\{0,1,\dots,m-1,\infty\}.
\end{align*}
Define the fastest FPT for any $N\ge1$,
\begin{align*}
T_{N}
:=\min\{\tau_{1},\dots,\tau_{N}\}.
\end{align*}
Furthermore, let
\begin{align*}
K_{N}\in\{0,\dots,m-1,\infty\}
\end{align*}
denote the index of the target hit by the fastest searcher. That is, if $T_{N}=\tau_{n^{*}}$ for some $n^{*}\in\{1,\dots,N\}$, then
\begin{align}\label{KN}
K_{N}=\kappa_{n^{*}}.
\end{align}
We note that event $\tau_{n^{*}}=\tau_{n'}<\infty$ for $n^{*}\neq n'$ has probability zero since $F(t)$ is continuous. Further, if $T_{N}=\infty$, then $\kappa_{n}=\infty$ for all $n\in\{1,\dots,N\}$ and thus $K_{N}=\infty$. Hence, there is no ambiguity in \eqref{KN} and $K_{N}$ is well-defined. We emphasize that $K_{N}$ is the index of the target hit by the fastest searcher, whereas $\kappa_{N}$ is the index of the target hit by the $N$th searcher.

We are interested in the distribution of $K_{N}$ for large $N$. The following proposition represents the distribution of $K_{N}$ in a form which is convenient for analyzing the large $N$ limit.

\begin{proposition}\label{pint}
Under the assumptions of section~\ref{splitting}, the distribution of $K_{N}$ can be written as the following Riemann--Stieltjes integral, 
\begin{align*}
\P(K_{N}=k)
=N\int_{0}^{\infty}\big(1-F(t)\big)^{N-1}\,\dd F_{k}(t),\quad k\in\{0,1,\dots,m-1\}.
\end{align*}
Further, $\P(K_{N}=\infty)=(\P(\tau=\infty))^{N}$, where $\P(\tau=\infty)=1-\lim_{t\to\infty}F(t)$.
\end{proposition}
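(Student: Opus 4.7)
The plan is to decompose the event $\{K_N = k\}$ according to which searcher is the fastest, exploit the iid symmetry to reduce to a single term, and then condition on the fastest searcher's FPT to obtain the Riemann--Stieltjes integral. The continuity assumption on $F$ is what makes the "fastest searcher" well-defined almost surely, so ties can be ignored throughout.

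First, I would write, for each $k \in \{0,1,\dots,m-1\}$,
\begin{align*}
\P(K_N = k)
= \sum_{n=1}^{N} \P\big(\kappa_n = k,\ \tau_n < \tau_j \text{ for all } j \neq n\big),
\end{align*}
where the events are disjoint up to the null event of ties (ties have probability zero because $F$ is continuous, so $\P(\tau_n = \tau_j) = 0$ for $n \neq j$). Because $\{(\tau_n,\kappa_n)\}$ are iid, each of the $N$ summands is equal, so
\begin{align*}
\P(K_N = k)
= N\, \P\big(\kappa_1 = k,\ \tau_j > \tau_1 \text{ for all } j = 2,\dots,N\big).
\end{align*}

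Next, I would condition on $(\tau_1,\kappa_1)$. By independence of $\tau_1$ from $\tau_2,\dots,\tau_N$, on the event $\tau_1 = t$ the conditional probability that all other searchers have not yet arrived is
\begin{align*}
\P(\tau_j > t \text{ for all } j = 2,\dots,N \mid \tau_1 = t)
= \prod_{j=2}^{N} \P(\tau_j > t)
= (1-F(t))^{N-1}.
\end{align*}
Integrating against the joint law of $(\tau_1,\kappa_1)$, whose restriction to $\{\kappa_1 = k\}$ has distribution $\dd F_k(t)$ on $(0,\infty)$, yields
\begin{align*}
\P(K_N = k)
= N \int_{0}^{\infty} (1-F(t))^{N-1}\, \dd F_k(t),
\end{align*}
as a Riemann--Stieltjes integral (well-defined since $F_k$ is of bounded variation and the integrand is continuous in $t$ except possibly at jumps of $F$, which contribute measure zero by continuity of $F$).

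For the second claim, the event $\{K_N = \infty\}$ coincides with $\{T_N = \infty\} = \{\tau_n = \infty \text{ for all } n\}$, which by independence has probability $\prod_{n=1}^{N}\P(\tau_n = \infty) = (\P(\tau = \infty))^N$, and $\P(\tau = \infty) = 1 - \lim_{t\to\infty} F(t)$ is immediate from the definition of $F$. The main (and essentially only) subtlety is the measure-theoretic handling of ties and of the $\tau = \infty$ atom, both of which are controlled by the standing assumption that $F$ is continuous with $F(0) = 0$; no genuine analytic obstacle arises.
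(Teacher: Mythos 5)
Your proof is correct and follows essentially the same route as the paper: decompose $\{K_N=k\}$ over which searcher is fastest, use the iid assumption to reduce to $N$ times a single term, and integrate $(1-F(t))^{N-1}$ against the law of $(\tau,\kappa)$ on $\{\kappa=k\}$. The only cosmetic difference is that the paper packages the final step as the identity $\P(X<Y)=\int S_Y\,\dd F_X$ for independent $X$, $Y$ via an auxiliary variable $\tau_N^{(k)}$, while you phrase it as conditioning on $\tau_1=t$; these are the same computation.
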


The proof of Proposition~\ref{pint}, as well as the proofs of all the results in this section, are given in the appendix.

\subsection{Extreme hitting probabilities}

Since $F(t)$ is a nondecreasing function, it is clear from the form of the integral in Proposition~\ref{pint} that the large $N$ asymptotics of $\P(K_{N}=k)$ depend chiefly on the behavior of $F(t)$ and $F_{k}(t)$ near $t=0$. The following proposition computes the exact asymptotics of an integral whose integrand is typical of the integrand in Proposition~\ref{pint} near $t=0$ for the case of diffusive search.

We remind the reader that $f\sim g$ denotes $f/g\to1$.

\begin{proposition}\label{p1}
Assume $C_{+}>C>0$, $A>0$, and $p,q\in\R$. Then there exists a $\delta_{0}>0$ so that for all $\delta\in(0,\delta_{0}]$, we have
\begin{align*}
&\int_{0}^{\delta}t^{q-2}e^{-C_{+}/t}\big(1-At^{p}e^{-C/t}\big)^{N-1}\,\dd t
\sim
{{\eta}}
(\ln N)^{p{\beta}-q}
N^{-{\beta}}
\quad\text{as }N\to\infty,
\end{align*}
where
\begin{align*}
{\beta}
&=C_{+}/C>1,\quad
{{\eta}}
=C^{q-1}(AC^{p})^{-\beta}\Gamma(\beta)>0,
\end{align*}
and $\Gamma({\beta}):=\int_{0}^{\infty}z^{{\beta}-1}e^{-z}\,\dd z$ denotes the gamma function.
\end{proposition}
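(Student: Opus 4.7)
The plan is to recognize that the integrand concentrates in a thin window where the competing factors $t^{q-2}e^{-C_+/t}$ and $(1-At^p e^{-C/t})^{N-1}$ balance. The first factor forces $t$ to stay away from $0$, while the second factor cuts off once $NAt^p e^{-C/t}$ is of order unity; these balance near $t \approx C/\ln N$. My plan is to change variables so this transition region lands at a fixed location independent of $N$, and then apply dominated convergence.

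First I would substitute $\ell = C/t$, which turns the integral into
\begin{equation*}
I_N = C^{q-1}\int_{C/\delta}^{\infty} \ell^{-q} e^{-\beta \ell}\bigl(1 - AC^p \ell^{-p} e^{-\ell}\bigr)^{N-1}\,\dd \ell,
\end{equation*}
where $\beta = C_+/C > 1$. Next I would perform the substitution $u = NAC^p\ell^{-p}e^{-\ell}$. Choosing $\delta_0>0$ small enough that $u(\ell)$ is strictly decreasing on $[C/\delta_0,\infty)$ (which requires only $\ell > -p$), this is a smooth bijection onto $(0,u_N^{\max}]$ with $u_N^{\max}\to\infty$. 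Using $e^{-\beta\ell} = u^\beta \ell^{p\beta}/(NAC^p)^\beta$ from the defining relation, together with $\dd\ell = -\dd u/(u(1+p/\ell))$ from implicit differentiation, the integral transforms to
\begin{equation*}
I_N = \frac{C^{q-1}(\ln N)^{p\beta-q}}{(NAC^p)^\beta}\int_{0}^{u_N^{\max}} u^{\beta-1}\left(\frac{\ell_N(u)}{\ln N}\right)^{p\beta-q}\bigl(1-u/N\bigr)^{N-1}\frac{\dd u}{1+p/\ell_N(u)},
\end{equation*}
where $\ell_N(u)$ is the inverse, determined implicitly by $\ell + p \ln \ell = \ln(NAC^p/u)$.

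From this defining equation I would show $\ell_N(u)/\ln N \to 1$ uniformly on compact subsets of $(0,\infty)$, so the integrand converges pointwise to $u^{\beta-1}e^{-u}$. I would then apply dominated convergence: for $u$ in any compact set the bound is routine; near $u \to 0^+$ the factor $(\ell_N(u)/\ln N)^{p\beta-q}$ grows at worst polylogarithmically in $|\ln u|$, which is absorbed by $u^{\beta-1}$ since $\beta > 1$; and for large $u$ up to order $N$, the bound $(1-u/N)^{N-1} \le e^{-u/2}$ (valid for $u \le N/2$, with the integrand vanishing beyond) tames any growth coming from $\ell_N(u)^{p\beta-q}$. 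The limit integral is $\int_0^\infty u^{\beta-1}e^{-u}\,\dd u = \Gamma(\beta)$, yielding the claimed asymptotic $\eta(\ln N)^{p\beta-q}N^{-\beta}$.

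The main obstacle is the construction of this $N$-uniform integrable majorant: the factor $(\ell_N(u)/\ln N)^{p\beta-q}$ must be controlled globally, which requires quantitative bounds on $\ell_N(u)$ extracted from the transcendental relation $\ell + p \ln\ell = \ln(NAC^p/u)$ both as $u \to 0^+$ and in the regime where $u$ is comparable to $N$. Everything else is a routine Laplace-type computation once the right change of variables is in place.
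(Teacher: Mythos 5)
Your proposal is correct and takes a genuinely different route from the paper. The paper first rescales $t\mapsto t/C$, then uses the elementary sandwich $-x(1+x)\le\ln(1-x)\le -x$ to replace the factor $(1-At^{p}e^{-C/t})^{N-1}$ by pure exponentials, and then performs the $N$-independent substitution $u=t^{p}e^{-1/t}$ (inverted via the Lambert W function) to land on a Laplace integral $\int h(u)e^{-(N-1)A_{0}u}\,\dd u$ with a logarithmic singularity $h(u)\sim u^{\beta-1}(\ln(1/u))^{p\beta-q}$ at $u=0$; the asymptotics are then read off from a cited generalization of Watson's lemma (Bleistein--Handelsman, Thm.~5), and the sandwich is closed by taking $\delta$ small. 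Your approach instead makes an $N$-dependent substitution $u=NAC^{p}\ell^{-p}e^{-\ell}$ so that the integral is already written with the correct prefactor $\eta(\ln N)^{p\beta-q}N^{-\beta}$ times an integral whose integrand converges pointwise to $u^{\beta-1}e^{-u}$, and the work is transferred to building an $N$-uniform majorant. This buys you a self-contained argument — no Lambert W, no reference to a log-singular Watson's lemma — at the cost of the majorant construction, which you correctly flag as the nontrivial step. A few concrete points on that step: (i) the parenthetical ``with the integrand vanishing beyond'' $u=N/2$ is only true if $u_{N}^{\max}=NA\delta^{p}e^{-C/\delta}\le N/2$, i.e.\ you should strengthen the choice of $\delta_{0}$ slightly from the paper's requirement $A\delta^{p}e^{-C/\delta}<1$ to $A\delta^{p}e^{-C/\delta}\le 1/2$; (ii) the sign of $p\beta-q$ matters: for $p\beta-q>0$ the dangerous regime is $u\to 0^{+}$, where a bound of the form $\ell_{N}(u)/\ln N\le c(1+|\ln u|)$ (uniform in $N\ge N_{0}$) gives a majorant $u^{\beta-1}(1+|\ln u|)^{p\beta-q}$ integrable near $0$ since $\beta>1$; for $p\beta-q<0$ the dangerous regime is $u$ of order $N$, where $\ell_{N}(u)\ge C/\delta$ is bounded but $\ln N$ grows — the saving observation is that $u\le u_{N}^{\max}$ forces $\ln N\gtrsim\ln u$, so the factor is $O((\ln u)^{q-p\beta})$, absorbed by $e^{-u/2}$. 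With these details filled in, the argument is sound and does reproduce the paper's asymptotic, including the constant.
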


The following theorem uses Proposition~\ref{p1} to compute the exact asymptotics of the distribution of $K_{N}$ assuming fairly detailed knowledge of the short-time behavior of $F$ and $F_{k}$. In particular, the theorem assumes the short-time asymptotics of $F$ and $F_{k}$ are known on a linear scale. We show in section~\ref{examples} that this short-time behavior of $F$ and $F_{k}$ is typical of diffusive search.

\begin{theorem}\label{detailprob}
Under the assumptions of section~\ref{splitting}, assume further that for some $k\in\{1,\dots,m-1\}$,
\begin{align}
F(t)
&\sim At^{p}e^{-C_{0}/t}\quad\text{as }t\to0+,\label{s1}\\
F_{k}(t)
&\sim Bt^{q}e^{-C_{k}/t}\quad\text{as }t\to0+,\label{s2}
\end{align}
where $C_{k}>C_{0}>0$, $A>0$, $B>0$, and $p,q\in\R$. Then
\begin{align*}
&\P(K_{N}=k)
\sim
{{\eta}}
(\ln N)^{p{\beta}-q}
N^{1-{\beta}}
\quad\text{as }N\to\infty,
\end{align*}
where
\begin{align}\label{bk}
{\beta}
&:=C_{k}/C_{0}>1,\quad
{{\eta}}
:=B(C_{0})^{q-p\beta}A^{-\beta}\beta\Gamma(\beta)
>0,
\end{align}
and $\Gamma({\beta}):=\int_{0}^{\infty}z^{{\beta}-1}e^{-z}\,\dd z$ denotes the gamma function.
\end{theorem}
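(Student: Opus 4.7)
The plan is to combine Proposition~\ref{pint}, which gives
$$\P(K_N=k) = N\int_0^\infty (1-F(t))^{N-1}\,\dd F_k(t),$$
with the asymptotic estimate of Proposition~\ref{p1}, after replacing $F$ and $F_k$ by their short-time surrogates. First I would localize: for any fixed small $\delta>0$ with $F(\delta)>0$, the tail contributes at most $N(1-F(\delta))^{N-1}$, which is exponentially small in $N$ and negligible compared with the polynomial-logarithmic target $(\ln N)^{p\beta-q}N^{1-\beta}$.

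Fix $\eps\in(0,1)$ and, shrinking $\delta$ if necessary, use \eqref{s1}--\eqref{s2} to sandwich $F$ and $F_k$ on $(0,\delta]$ between the surrogates
$$\tilde F^\pm(t):=(1\pm\eps)At^p e^{-C_0/t},\qquad \tilde F_k^\pm(t):=(1\pm\eps)Bt^q e^{-C_k/t},$$
making $\delta$ small enough that each surrogate is strictly increasing on $(0,\delta]$ (as follows from $(t^qe^{-C_k/t})'=(C_k+qt)t^{q-2}e^{-C_k/t}>0$ for small $t$). Monotonicity of $u\mapsto(1-u)^{N-1}$ converts the pointwise bound on $F$ into $(1-\tilde F^+)^{N-1}\le(1-F)^{N-1}\le(1-\tilde F^-)^{N-1}$. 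To transfer the $F_k$ bound through the Stieltjes integrator, I would apply integration by parts: for any continuous decreasing $\phi$,
$$\int_0^\delta \phi\,\dd F_k = \phi(\delta)F_k(\delta)-\int_0^\delta F_k\,\dd\phi,$$
so that $-\dd\phi\ge0$ lets $\tilde F_k^-\le F_k\le\tilde F_k^+$ pass directly into the integral (the boundary contribution at $\delta$ is again exponentially small). Combining these yields the sandwich
$$\int_0^\delta(1-\tilde F^+)^{N-1}\,\dd\tilde F_k^-\;\le\;\int_0^\delta(1-F)^{N-1}\,\dd F_k\;\le\;\int_0^\delta(1-\tilde F^-)^{N-1}\,\dd\tilde F_k^+.$$

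Both bounding integrals now have smooth integrators, with $\dd\tilde F_k^\pm(t)=(1\pm\eps)B(C_k+qt)t^{q-2}e^{-C_k/t}\dd t$. Splitting the factor $C_k+qt$ reduces each to a dominant integral with $C_k t^{q-2}e^{-C_k/t}\dd t$ and a subdominant one with $qt^{q-1}e^{-C_k/t}\dd t$. Proposition~\ref{p1}, applied with $C_+=C_k$ and $C=C_0$ (and the obvious identifications of $A,p,q$), controls the first at order $(\ln N)^{p\beta-q}N^{-\beta}$ and the second at the smaller order $(\ln N)^{p\beta-q-1}N^{-\beta}$. After multiplying by $N$, tracking constants, and using $C_k/C_0=\beta$, each bound reduces to $\tfrac{1\pm\eps}{(1\mp\eps)^\beta}\,\eta\,(\ln N)^{p\beta-q}N^{1-\beta}$, and letting $\eps\to0$ pinches $\P(K_N=k)/[(\ln N)^{p\beta-q}N^{1-\beta}]$ to $\eta$.

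The main obstacle is the Stieltjes step, since the hypotheses bound only the distribution function $F_k$ and not a density; $F_k$ cannot be differentiated. The resolution is the monotone-integrand integration-by-parts argument above, which converts the one-sided estimate $F_k\le\tilde F_k^+$ into $\int\phi\,\dd F_k\le\int\phi\,\dd\tilde F_k^+$ without ever differentiating $F_k$, and this is the only nonroutine point in the argument.
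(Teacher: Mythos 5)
Your proof is correct and follows essentially the same route as the paper: both localize the Riemann--Stieltjes integral from Proposition~\ref{pint} to $(0,\delta)$, sandwich $F$ and $F_k$ between $(1\pm\eps)$ surrogates, handle the Stieltjes integrator via integration by parts with the monotone factor $(1-\cdot)^{N-1}$, and then invoke Proposition~\ref{p1}. The only cosmetic difference is that the paper applies Proposition~\ref{p1} to the post-IBP integral $\int t^{p+q-2}e^{-(C_0+C_k)/t}(1-\cdot)^{N-2}\,\dd t$ (with $C_+=C_0+C_k$ and an extra factor of $N$), whereas you convert back to $\int t^{q-2}e^{-C_k/t}(1-\cdot)^{N-1}\,\dd t$ and apply it with $C_+=C_k$ directly; both yield the same constant.
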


We show in sections~\ref{examples} and \ref{general} that the constants $C_{0}$ and $C_{k}$ in Theorem~\ref{detailprob} are related to the shortest distances to the closest target (taken to be target $0$) and the $k$th target. In particular, we show that it is generally the case that
\begin{align*}
C_{j}
=\frac{(L_{j})^{2}}{4D}>0\quad j\in\{0,\dots,m-1\},
\end{align*}
where $L_{j}>0$ is the shortest distance from the searcher starting location(s) (in an appropriately chosen notion of distance) and $D>0$ is a characteristic diffusion coefficient. Hence, Theorem~\ref{detailprob} gives the large $N$ behavior of the probability that the fastest searcher finds a target other than the closest target. Of course, it follows that the probability that the fastest searcher finds the closest target converges to unity at a rate determined by the next closest target(s). We further note that Proposition~\ref{pint} ensures that the probability that the fastest searcher does not hit any target vanishes exponentially fast,
\begin{align}\label{vf}
\P(K_{N}=\infty)
=(\P(\tau=\infty))^{N}\to0\quad\text{as }N\to\infty,
\end{align}
apart from the trivial case that $\P(\tau=\infty)=1$. In particular, under the assumptions of Theorem~\ref{detailprob}, the decay in \eqref{vf} holds and thus
\begin{align*}
\P(K_{N}=\infty)
=o(\P(K_{N}=k))\quad\text{as $N\to\infty$ for any $k\in\{0,1,\dots,m-1\}$},
\end{align*}
since \eqref{s1} implies $\P(\tau=\infty)<1$. We remind the reader that $f=o(g)$ denotes $f/g\to0$.

In general scenarios, the short-time asymptotics of $F$ and $F_{k}$ required by Theorem~\ref{detailprob} may not be known. The following theorem gives an upper bound on the decay of the distribution of $K_{N}$ assuming one merely has bounds on the short-time behavior of $F$ and $F_{k}$ on a logarithmic scale. We show in section~\ref{general} that these bounds hold in very general settings for diffusive search.

\begin{theorem}\label{logprob}
Under the assumptions of section~\ref{splitting}, assume further that for some $k\in\{1,\dots,m-1\}$,
\begin{align}\label{lb}
\lim_{t\to0+}t\ln F(t)
&\ge -C_{0}<0,\quad
\lim_{t\to0+}t\ln F_{k}(t)
\le -C_{k}<0,
\end{align}
where $C_{k}>C_{0}>0$. Then for every $\eps>0$, 
\begin{align}\label{ub}
&\P(K_{N}=k)
=o(N^{1-{\beta}+\eps})
\quad\text{as }N\to\infty,
\end{align}
where
\begin{align*}
{\beta}
&:=C_{k}/C_{0}>1.
\end{align*}
If we assume further that
\begin{align}\label{equal}
\lim_{t\to0+}t\ln F(t)
&= -C_{0}<0,\quad
\lim_{t\to0+}t\ln F_{k}(t)
= -C_{k}<0,
\end{align}
then in addition to \eqref{ub}, we also have that for every $\eps>0$,
\begin{align}\label{both}
N^{1-{\beta}-\eps}
=o(\P(K_{N}=1))
\quad\text{as }N\to\infty.
\end{align}
\end{theorem}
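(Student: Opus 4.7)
The plan is to analyze the integral representation from Proposition~\ref{pint},
\[
\P(K_N=k) = N\int_0^\infty (1-F(t))^{N-1}\,\dd F_k(t),
\]
using the logarithmic short-time hypotheses to sandwich $\P(K_N=k)$ between powers of $N$. The integrand is concentrated on the scale $t^{*}\sim C_{0}/\ln N$ (where $F(t)\sim 1/N$), and the main observation is that on a small interval $(0,\delta)$ the hypotheses produce a pointwise power comparison of the form $F_k(t)\le F(t)^{\alpha}$ (for the upper bound) or $F_k(t)\ge F(t)^{\alpha'}$ (for the lower bound), with $\alpha,\alpha'$ close to $\beta=C_k/C_0$.

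For the upper bound \eqref{ub}, I would fix $\eps>0$ and pick $\eps'>0$ so that $\alpha:=(C_k-\eps')/(C_0+\eps')>\beta-\eps$. Hypothesis \eqref{lb} yields $\delta>0$ with $F(t)\ge e^{-(C_0+\eps')/t}$ and $F_k(t)\le e^{-(C_k-\eps')/t}$ on $(0,\delta)$, which together imply $F_k(t)\le F(t)^{\alpha}$ there. The tail $\int_\delta^\infty(1-F)^{N-1}\,\dd F_k$ is dominated by $(1-F(\delta))^{N-1}$ and decays exponentially. On $(0,\delta)$ I would integrate by parts (legitimate since $F$ and $F_k$ are continuous and of bounded variation) and substitute $u=F(t)$:
\[
\int_0^\delta (1-F)^{N-1}\,\dd F_k \le (1-F(\delta))^{N-1}F_k(\delta)+(N-1)\int_0^{F(\delta)} u^{\alpha}(1-u)^{N-2}\,\dd u.
\]
Bounding the right-hand integral by the beta integral $B(\alpha+1,N-1)$ and invoking $(N-1)B(\alpha+1,N-1)\sim\Gamma(\alpha+1)N^{-\alpha}$ then gives $\P(K_N=k)=O(N^{1-\alpha})=o(N^{1-\beta+\eps})$, which is \eqref{ub}.

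For the lower bound \eqref{both} under \eqref{equal}, I would fix $\eps>0$ and choose $\eps'>0$ small enough that $1-(C_k+\eps')/(C_0-\eps')\ge 1-\beta-\eps/2$. Now \eqref{equal} supplies $\delta>0$ with $F(t)\le e^{-(C_0-\eps')/t}$ and $F_k(t)\ge e^{-(C_k+\eps')/t}$ on $(0,\delta)$. Set $t_N:=(C_0-\eps')/\ln N$; for large $N$, $t_N<\delta$ and $F(t_N)\le 1/N$, so on $(0,t_N)$ the bound $(1-F(t))^{N-1}\ge(1-1/N)^{N-1}\ge 1/e$ holds. Restricting the integral in Proposition~\ref{pint} to $(0,t_N)$ and using the lower bound on $F_k(t_N)$ yields
\[
\P(K_N=k)\ge \frac{N}{e}\,F_k(t_N)\ge \frac{1}{e}\,N^{1-(C_k+\eps')/(C_0-\eps')}\ge \frac{1}{e}\,N^{1-\beta-\eps/2},
\]
so the extra $N^{\eps/2}$ delivers $N^{1-\beta-\eps}=o(\P(K_N=k))$, matching \eqref{both}.

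The arguments are essentially elementary. The main technical care lies in tracking the $\eps$-perturbations consistently and in justifying the Riemann--Stieltjes integration by parts and the change of variables, both of which follow from continuity and monotonicity of $F$ and $F_k$. Promoting \eqref{lb} to the equalities \eqref{equal} in the second part is precisely what supplies the opposite one-sided inequalities on $F$ and $F_k$ needed to match the upper bound on the concentration scale $t\sim 1/\ln N$.
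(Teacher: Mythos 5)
Your proof is correct, and it takes a genuinely different and more elementary route than the paper's. The paper establishes Theorem~\ref{logprob} by integrating by parts and then invoking Proposition~\ref{p1}, the same asymptotic lemma for integrals of the form $\int_0^\delta t^{q-2}e^{-C_+/t}(1-At^pe^{-C/t})^{N-1}\,\dd t$ that drives the sharp result Theorem~\ref{detailprob}; for the upper bound it plugs the one-sided exponential bounds into that lemma, and for the lower bound it does the same with the reversed inequalities. You instead convert the logarithmic hypotheses into the pointwise power comparison $F_k(t)\le F(t)^{\alpha}$ on $(0,\delta)$, integrate by parts, and then change variables $u=F(t)$ to land on the classical beta integral $(N-1)B(\alpha+1,N-1)\sim\Gamma(\alpha+1)N^{-\alpha}$; this sidesteps Proposition~\ref{p1} entirely. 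Your lower bound is lighter still: rather than a second application of the lemma, you use a concentration/truncation argument at the scale $t_N=(C_0-\eps')/\ln N$, where $F(t_N)\le 1/N$ forces $(1-F(t))^{N-1}\ge e^{-1}$, and then read off $F_k(t_N)\ge N^{-(C_k+\eps')/(C_0-\eps')}$. The paper's route buys uniformity with the machinery used for the exact asymptotics; your route is self-contained and arguably cleaner when all one wants is the polynomial-order bound of Theorem~\ref{logprob}. One small point of care that you correctly flag: the change of variables $u=F(t)$ in the Stieltjes integral is legitimate because $F$ is continuous, nondecreasing, and $F(0)=0$ (intervals of constancy contribute nothing), so no strict monotonicity is required.
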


We note that \eqref{lb} implies that $\P(\tau=\infty)<1$ and thus the decay in \eqref{vf} holds under the assumptions of Theorem~\ref{logprob}.

\section{Examples and numerical simulations}\label{examples}

Theorem~\ref{detailprob} yields the exact asymptotics of the extreme hitting probabilities as $N\to\infty$ in terms of the short-time behavior of $F(t)$ and $F_{k}(t)$. In this section, we illustrate these results and compare them to numerical simulations in several examples.

\subsection{Pure diffusion in one dimension}\label{ex1d}

Consider pure diffusion with diffusivity $D>0$ in one dimensional space $\R$. Suppose each searcher starts at $x_{0}\in(0,l)$ and the targets are at the left and right of the interval $(0,l)$ and are denoted by
\begin{align*}
V_{0}:=(-\infty,0],\quad
V_{1}:=[l,\infty).
\end{align*}
If there is only one searcher, then it is well-known that the probability that this single searcher reaches $V_{1}$ before $V_{0}$ is
\begin{align*}
\P(K_{1}=1)
=\frac{x_{0}}{l}
=1-\P(K_{1}=0).
\end{align*}

We now approximate the probability that the fastest searcher out of $N\gg1$ searchers finds $V_{1}$ before $V_{0}$. In the simple case that the searchers start exactly in the center of the interval, symmetry implies
\begin{align*}
\P(K_{N}=1)
=\P(K_{N}=0)
=1/2\quad\text{for all $N\ge1$ if $x_{0}=l/2$}.
\end{align*}
To understand the behavior of $K_{N}$ apart from this case, we need information about the short-time behavior of $F_{k}$. Without loss of generality, assume the searchers start in the left half of the interval,
\begin{align*}
x_{0}\in(0,l/2),
\end{align*}
and define the lengths from $x_{0}$ to the respective targets,
\begin{align}\label{ls}
0<L_{0}:=x_{0}<l-x_{0}=:L_{1}.
\end{align}
In this case, one can show that (see the Appendix)
\begin{align}\label{shortF}
F(t)
:=\P(\tau\le t)\sim At^{p}e^{-C_{0}/t}\quad\text{as }t\to0+,
\end{align}
where
\begin{align}\label{shortFA}
A
=\sqrt{\frac{4D}{\pi (L_{0})^{2}}},\quad 
p
=\frac{1}{2},\quad
C_{0}
=\frac{(L_{0})^{2}}{4D}.
\end{align}
One can also show that (see the Appendix)
\begin{align}\label{shortfar}
F_{1}(t)
\sim Bt^{q}e^{-C_{1}/t}
\quad\text{as }t\to0+,
\end{align}
where
\begin{align}\label{shortfarB}
B
=\sqrt{\frac{4D}{\pi (L_{1})^{2}}},\quad 
q
=p
=\frac{1}{2},\quad
C_{1}
=\frac{(L_{1})^{2}}{4D}.
\end{align}

Therefore, Theorem~\ref{detailprob} implies that \begin{align}\label{1das}
&\P(K_{N}=1)
\sim
{{\eta}}
(\ln N)^{(\beta-1)/2}
N^{1-{\beta}}
\quad\text{as }N\to\infty,
\end{align}
where 
\begin{align*}
{\beta}
&=\Big(\frac{L_{1}}{L_{0}}\Big)^{2}
=\Big(\frac{l-x_{0}}{x_{0}}\Big)^{2}
>1,\\
{{\eta}}
&=\sqrt{\beta\pi^{\beta-1}}\Gamma(\beta)
>0.
\end{align*}

In Figure~\ref{fig1dex}, we compare \eqref{1das} to numerical simulations. In the left panel, the solid curves are the asymptotic formula in \eqref{1das} and the square markers are computed using numerical integration of the representation for $\P(K_{N}=1)$ given in Proposition~\ref{pint} (see the Appendix for details of the numerical method). The right panel plots the relative error between the asymptotic formula in \eqref{1das} and the value of $\P(K_{N}=1)$ obtained from numerical integration,
\begin{align}\label{re}
\bigg|\frac{\P(K_{N}=1)-{{\eta}}
(\ln N)^{(\beta-1)/2}
N^{1-{\beta}}}{\P(K_{N}=1)}\bigg|.
\end{align}

\begin{figure}
  \centering
             \includegraphics[width=1\textwidth]{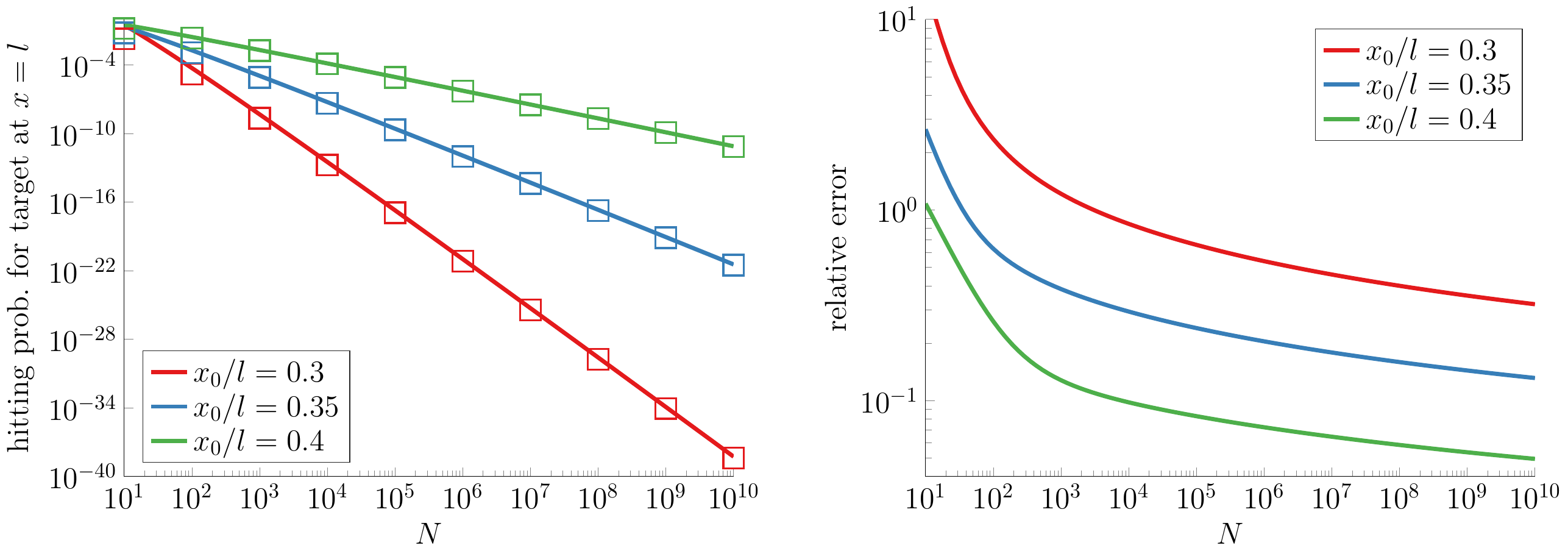}
 \caption{Extreme hitting probabilities for pure diffusion in the interval $(0,l)$. See section~\ref{ex1d} for details.}
 \label{fig1dex}
\end{figure}

Before moving to the next example, we briefly point out that \eqref{1das} yields the large $N$ asymptotics of the solution to Laplace's equation in the $N$-dimensional hypercube with certain mixed boundary conditions. In particular, let $\mathbf{x}=(x_{1},\dots,x_{N})\in\R^{N}$ denote an $N$-dimensional vector and suppose the function $u(\mathbf{x})$ is harmonic in $(0,l)^{N}$,
\begin{align*}
\Delta u
&=0,\quad \mathbf{x}\in(0,l)^{N}.
\end{align*}
Suppose further that $u$ satisfies the boundary conditions
\begin{align*}
u(\mathbf{x})
&=0\quad\text{if $x_{n}=0$ for some $n\in\{1,\dots,N\}$ and $x_{i}\in(0,l)$ for $i\neq n$},\\
u(\mathbf{x})
&=1\quad\text{if $x_{n}=l$ for some $n\in\{1,\dots,N\}$ and $x_{i}\in(0,l)$ for $i\neq n$}.
\end{align*}
Since $N$ independent diffusive searchers in the interval $(0,l)$ is equivalent to a single diffusive searcher in $(0,l)^{N}\in\R^{N}$, it follows that \cite{oksendal2003}
\begin{align*}
u((x_{0},x_{0},\dots,x_{0}))
=\P(K_{N}=1).
\end{align*}
Hence, \eqref{1das} yields the large $N$ behavior of $u((x_{0},x_{0},\dots,x_{0}))$. The analogous result for the asymptotics of solutions to similar high-dimensional elliptic PDEs holds for the examples given below.

\subsection{Diffusion with drift in one dimension}\label{drift}

Consider the example in section~\ref{ex1d}, but now suppose that each searcher experiences a constant drift $\mu\in\R$. Precisely, suppose the position $\{X(t)\}_{t\ge0}$ of a searcher evolves according to the stochastic differential equation,
\begin{align*}
\dd X(t)
=\mu\,\dd t
+\sqrt{2D}\,\dd W(t),
\end{align*}
where $\{W(t)\}_{t\ge0}$ denotes a standard Brownian motion. As in section~\ref{ex1d}, assume that the searchers start in the left half of the interval, $x_{0}\in(0,l/2)$.

Define $0<L_{0}:=x_{0}<l-x_{0}=:L_{1}$. In the Appendix, we show that $F(t)$ satisfies \eqref{shortF} with
\begin{align}\label{driftA}
A
=\exp\Big(\frac{-{\mu}L_{0}}{2D}\Big)
\sqrt{\frac{4D}{\pi (L_{0})^{2}}},\quad 
p
=\frac{1}{2},\quad
C_{0}
=\frac{(L_{0})^{2}}{4D},
\end{align}
and that $F_{1}(t)$ satisfies \eqref{shortfar} with
\begin{align}\label{driftB}
B
=\exp\Big(\frac{{\mu}L_{1}}{2D}\Big)
\sqrt{\frac{4D}{\pi (L_{1})^{2}}},\quad 
q
=p
=\frac{1}{2},\quad
C_{1}
=\frac{(L_{1})^{2}}{4D}.
\end{align}
In particular, the short-time asymptotics of $F$ and $F_{1}$ are unchanged from the problem in section~\ref{ex1d} with zero drift except for the factor of $\exp(-{\mu}L_{0}/(2D))$ in $F$ and the factor of $\exp({\mu}L_{1}/(2D))$ in $F_{1}$.

Therefore, Theorem~\ref{detailprob} implies that \begin{align}\label{driftas}
&\P(K_{N}=1)
\sim
{{\eta}}
(\ln N)^{(\beta-1)/2}
N^{1-{\beta}}
\quad\text{as }N\to\infty,
\end{align}
where 
\begin{align*}
{\beta}
&=\Big(\frac{L_{1}}{L_{0}}\Big)^{2}
=\Big(\frac{l-x_{0}}{x_{0}}\Big)^{2}
>1,\\
{{\eta}}
&=\exp\Big(\frac{\mu l}{2D}\sqrt{\beta}\Big)\eta_{0}
>0,
\end{align*}
where $\eta_{0}:=\sqrt{\beta\pi^{\beta-1}}\Gamma(\beta)$ is the constant computed for the example with zero drift in section~\ref{ex1d}. Notice that the drift $\mu\in\R$ plays a minor role in the asymptotics of $\P(K_{N}=1)$ since it only affects the constant prefactor $\eta$ rather than the decay rate.

\begin{figure}
  \centering
             \includegraphics[width=1\textwidth]{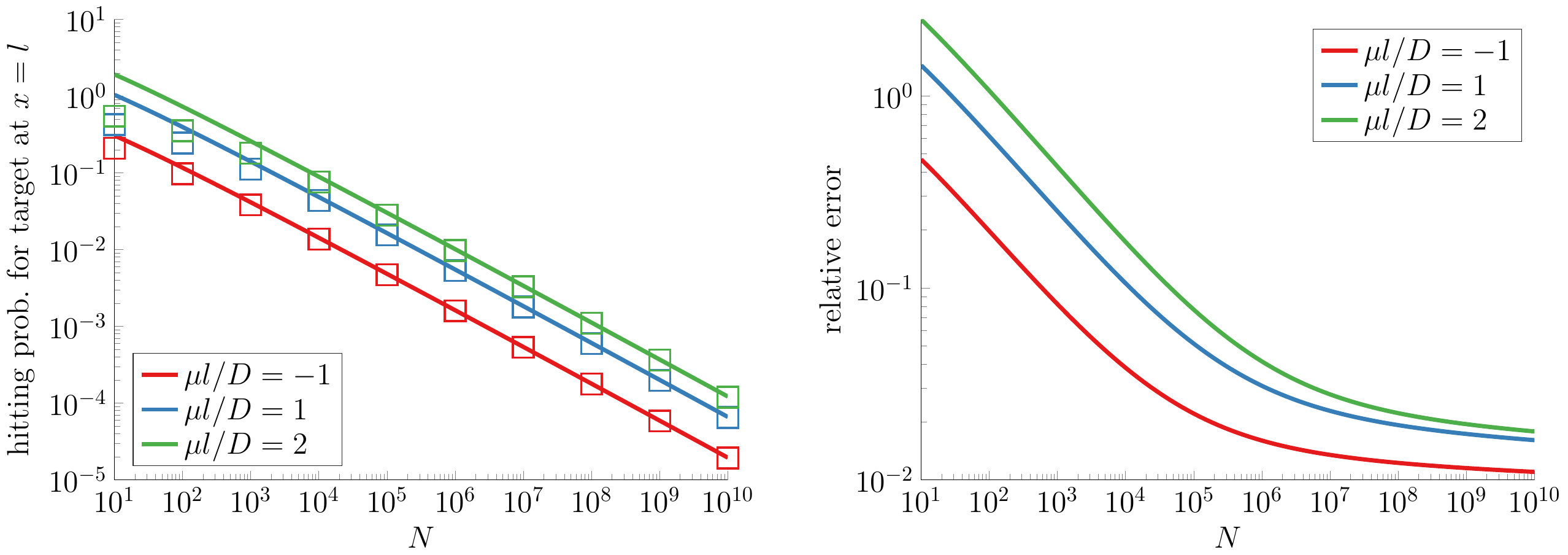}
 \caption{Extreme hitting probabilities for one-dimensional diffusion with drift. See section~\ref{drift} for details.}
 \label{fig1ddrift}
\end{figure}

In Figure~\ref{fig1ddrift}, we compare \eqref{driftas} to numerical simulations for the starting position $x_{0}=0.45 l\in(0,l/2)$. In the left panel, the solid curves are the asymptotic formula in \eqref{driftas} and the square markers are computed using numerical integration of the representation for $\P(K_{N}=1)$ given in Proposition~\ref{pint} (see the Appendix for details of the numerical method). The right panel plots the relative error.

For this example, it is straightforward to compute the probability that a given single searcher starting at $x_{0}\in(0,l)$ hits $x=l$ before $x=0$ \cite{oksendal2003},
\begin{align}\label{singledrift}
\P(K_{1}=1)
=\frac{\left(1-e^{-{{\mu}x_{0}}/{D}}\right) e^{{\mu (l-x_{0})}/{D}}}{1-e^{-{\mu l}/{D}}}.
\end{align}
For the positive values of the drift plotted in Figure~\ref{fig1ddrift} (namely, $\mu l/D=1$ and $\mu l/D=2$) and the starting position $x_{0}=0.45 l$, equation~\eqref{singledrift} implies that a given single searcher is actually more likely to hit $x=l$ before $x=0$ (i.e.\ $\P(K_{1}=1)>1/2$), despite the fact that the fastest searcher only rarely hits $x=l$ before $x=0$ if $N$ is large.

\subsection{Partially absorbing target(s)}\label{partial}

In the examples above, a target was ``found'' by the searcher as soon as the searcher touched the target. In particular, we defined the FPT to be $\tau:=\inf\{t>0:X(t)\in\cup_{k=0}^{m-1}V_{k}\}$. In this scenario, the targets are said to be ``perfectly absorbing.'' An alternative model is that of ``partially absorbing'' targets \cite{grebenkov2006, erban07}, in which the searcher ``finds'' (or ``reacts with'') a target only after spending some time near the target. Mathematically, the FPT of interest for partially absorbing targets is
\begin{align}\label{taugamma}
\tau_{\textup{partial}}
:=\inf\{t>0:\lambda_{k}(t)>\xi_{k}/\gamma_{k}\,\text{for some }k\in\{0,\dots,m-1\}\},
\end{align}
where $\{\xi_{k}\}_{k=0}^{m-1}$ are $m$ independent unit rate exponential random variables, $\{\gamma_{k}\}_{k=0}^{m-1}$ are $m$ given nonnegative parameters called ``trapping rates'' \cite{Berezhkovskii2004, lawley2019boundary}, and $\lambda_{k}(t)$ is the local time of $X(t)$ on $V_{k}$ \cite{grebenkov2006} ($\gamma_{k}>0$ has dimension length/time and $\lambda_{k}(t)$ has dimension time/length). 

Consider the example in section~\ref{ex1d}, but now suppose that the targets $V_{0}$ and $V_{1}$ have respective trapping rates $\gamma_{0}>0$ and $\gamma_{1}>0$. If we define the survival probability conditioned on the initial location of the searcher,
\begin{align}\label{pS}
S(x,t)
:=\P(\tau_{\textup{partial}}>t\,|\,X(0)=x),
\end{align}
then $S$ satisfies the backward Kolmogorov equation,
\begin{align}\label{ppde}
\frac{\partial}{\partial t}S
&=D\frac{\partial^{2}}{\partial x^{2}}S,\quad x\in(0,l),
\end{align}
with unit initial condition, $S=1$ at $t=0$, and Robin boundary conditions,
\begin{align}\label{pbcs}
\begin{split}
D\frac{\partial}{\partial x}S
&=\gamma_{0}S,\quad x=0,\\
-D\frac{\partial}{\partial x}S
&=\gamma_{1}S,\quad x=l.
\end{split}
\end{align}
Setting $\gamma_{k}=\infty$ corresponds to making $V_{k}$ perfectly absorbing, which can be seen from \eqref{pbcs} or \eqref{taugamma}.

Define $0<L_{0}:=x_{0}<l-x_{0}=:L_{1}$, where we have again assumed that the searchers start in the left half of the interval, $x_{0}\in(0,l/2)$. We conjecture that $F(t)$ satisfies \eqref{shortF} with
\begin{align}\label{ac1}
A
=\begin{cases}
\frac{2\gamma_{0}}{L_{0}}\sqrt{\frac{4D}{\pi (L_{0})^{2}}}  & \text{if }\gamma_{0}\in(0,\infty),\\
\sqrt{\frac{4D}{\pi (L_{0})^{2}}} & \text{if }\gamma_{0}=\infty,
\end{cases}
\qquad 
p
=\begin{cases}
3/2 & \text{if }\gamma_{0}\in(0,\infty),\\
1/2 & \text{if }\gamma_{0}=\infty,
\end{cases}
\end{align}
and $C_{0}=\frac{(L_{0})^{2}}{4D}$. We further conjecture that $F_{1}(t)$ satisfies \eqref{shortfar} with the analogous values of $B$, $q$, and $C_{1}$,
\begin{align}\label{ac3}
B
=\begin{cases}
\frac{2\gamma_{1}}{L_{1}}\sqrt{\frac{4D}{\pi (L_{1})^{2}}} & \text{if }\gamma_{1}\in(0,\infty),\\
\sqrt{\frac{4D}{\pi (L_{1})^{2}}} & \text{if }\gamma_{1}=\infty,
\end{cases}
\qquad 
q
=\begin{cases}
3/2 & \text{if }\gamma_{1}\in(0,\infty),\\
1/2 & \text{if }\gamma_{1}=\infty,
\end{cases}
\end{align}
and $C_{1}=\frac{(L_{1})^{2}}{4D}$. While we do not prove \eqref{ac1}-\eqref{ac3}, they can be derived by assuming that the presence of target $k$ does not affect the short-time asymptotics of $F_{1-k}$. See the Appendix for this derivation and the example in section~\ref{3d} below for a more detailed justification of an analogous conjecture in three dimensions. 

\begin{figure}
  \centering
             \includegraphics[width=1\textwidth]{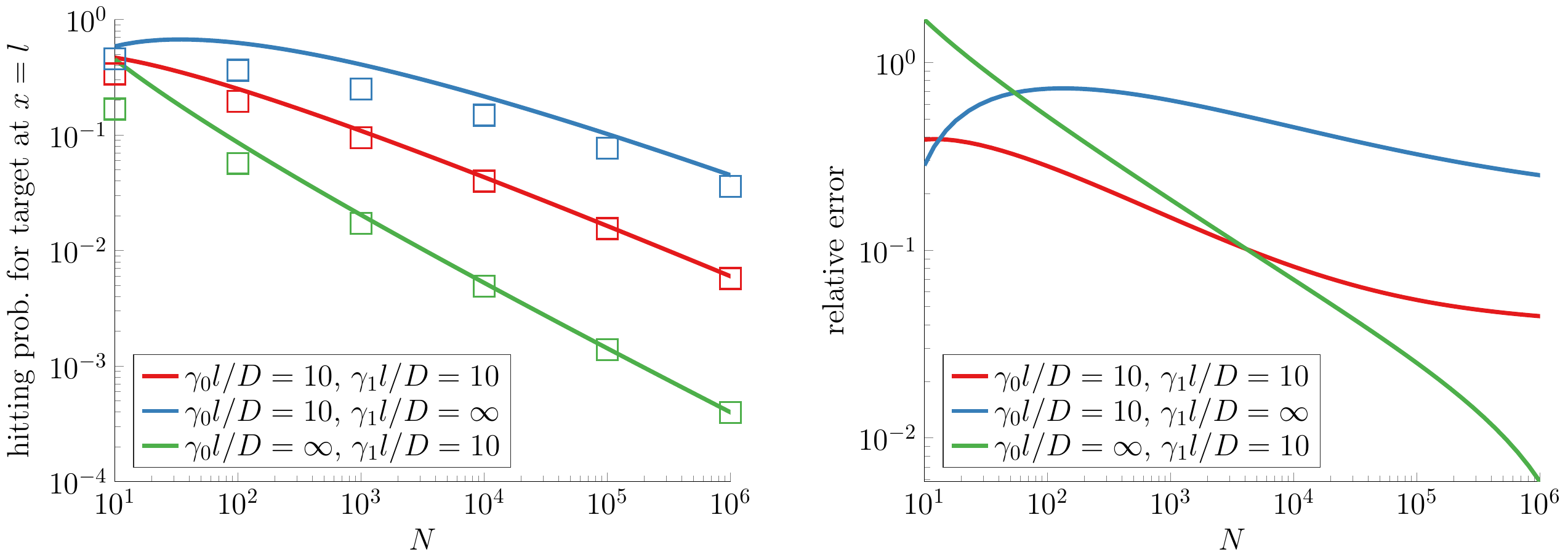}
 \caption{Extreme hitting probabilities for partially absorbing targets. See section~\ref{partial} for details.}
 \label{figpartial}
\end{figure}

Assuming \eqref{shortF} and \eqref{shortfar} hold with \eqref{ac1}-\eqref{ac3}, Theorem~\ref{detailprob} implies that
\begin{align}\label{partialas}
&\P(K_{N}=1)
\sim
{{\eta}}
(\ln N)^{\rho}
N^{1-{\beta}}
\quad\text{as }N\to\infty,
\end{align}
where ${\beta}=(\frac{L_{1}}{L_{0}})^{2}=(\frac{l-x_{0}}{x_{0}})^{2}>1$ and the values of the constant prefactor $\eta>0$ and the logarithmic power $\rho\in\R$ depend on which target(s) is partially or perfectly absorbing. Specifically, 
\begin{alignat*}{2}
\eta
&=\tfrac{2\gamma_{1}}{L_{1}}(\tfrac{2\gamma_{0}}{L_{0}})^{-\beta}(\tfrac{(L_{0})^{2}}{4D})^{1-\beta}\eta_{0},
\quad
&&\rho
=\tfrac{3}{2}\beta-\tfrac{3}{2}
\quad\text{if }\gamma_{0},\gamma_{1}\in(0,\infty),\\
\eta
&=(\tfrac{2\gamma_{0}}{L_{0}})^{-\beta}(\tfrac{(L_{0})^{2}}{4D})^{-\beta}\eta_{0},
\quad
&&\rho
=\tfrac{3}{2}\beta-\tfrac{1}{2}
\quad\text{if }\gamma_{0}\in(0,\infty),\gamma_{1}=\infty,\\
\eta
&=\tfrac{2\gamma_{1}}{L_{1}}\tfrac{(L_{0})^{2}}{4D}\eta_{0},
\quad
&&\rho
=\tfrac{1}{2}\beta-\tfrac{3}{2}
\quad\text{if }\gamma_{0}=\infty,\gamma_{1}\in(0,\infty),\\
\eta
&=\eta_{0},
\quad
&&\rho
=\tfrac{1}{2}\beta-\tfrac{1}{2}
\quad\text{if }\gamma_{0}=\gamma_{1}=\infty,
\end{alignat*}
where $\eta_{0}:=\sqrt{\beta\pi^{\beta-1}}\Gamma(\beta)$ is the constant computed for the example with perfectly absorbing targets in section~\ref{ex1d}.

In Figure~\ref{figpartial}, we compare \eqref{partialas} to numerical simulations for the starting position $x_{0}=0.45 l\in(0,l/2)$. In the left panel, the solid curves are the asymptotic formula in \eqref{partialas} and the square markers are computed using numerical integration of the representation for $\P(K_{N}=1)$ given in Proposition~\ref{pint} (see the Appendix for details of the numerical method). The right panel plots the relative error.

\subsection{Concentric targets in three dimensions}\label{3d}

Consider pure diffusion with diffusivity $D>0$ in three-dimensional space $\R^{3}$. Suppose there is an ``inner'' target at the origin with radius $R_{0}>0$,
\begin{align*}
V_{0}
:=\{x\in\R^{3}:\|x\|\le R_{0}\},
\end{align*}
and an ``outer'' target defined by
\begin{align*}
V_{1}
:=\{x\in\R^{3}:\|x\|\ge R_{1}\},
\end{align*}
where $R_{1}>R_{0}>0$ and $\|\cdot\|$ denotes the standard Euclidean norm. Suppose the searchers start at radius $\|X(0)\|\in(R_{0},R_{1})$ between these two concentric targets (see the left panel of Figure~\ref{fig3dschem} for an illustration). Suppose the searchers start closer to the inner target so that the distances to the targets satisfy
\begin{align*}
L_{0}
:=\|X(0)\|-R_{0}
<R_{1}-\|X(0)\|
=:L_{1}.
\end{align*}

\begin{figure}
  \centering
\includegraphics[width=0.3\textwidth]{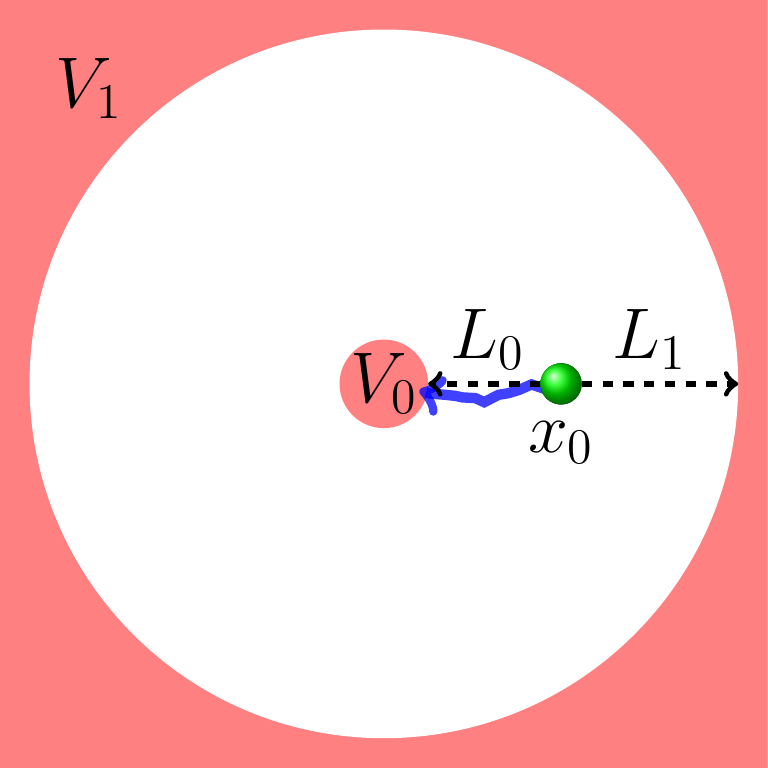}
\qquad\qquad
\includegraphics[width=0.35\textwidth]{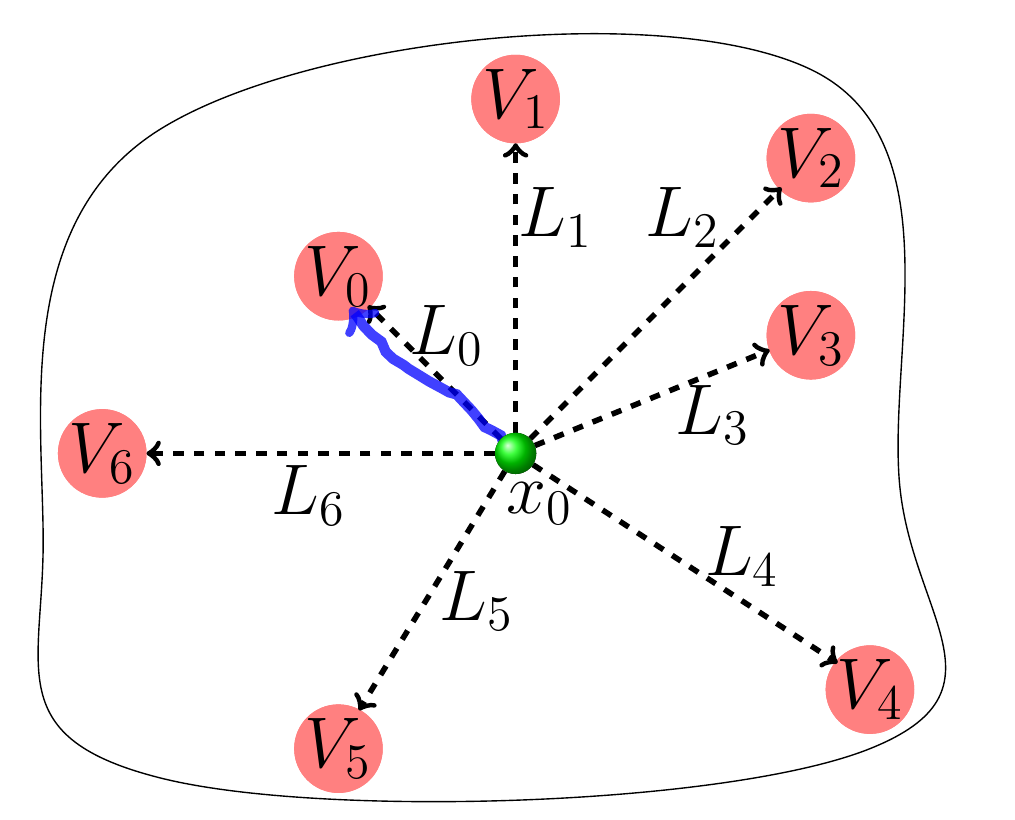}
 \caption{\textbf{Left:} Schematic diagram of concentric targets in three dimensions studied in section~\ref{3d}. \textbf{Right:} Schematic diagram of the narrow capture problem in three dimensions studied in section~\ref{narrow}.}
 \label{fig3dschem}
\end{figure}

We conjecture that
\begin{align}
F(t)
&\sim At^{p}e^{-C_{0}/t}\quad\text{as }t\to0+,\label{sc1}\\
F_{1}(t)
&\sim Bt^{q}e^{-C_{1}/t}\quad\text{as }t\to0+,\label{sc2}
\end{align}
where
\begin{align}
A
&=\frac{R_{0}}{\|X(0)\|}\sqrt{\frac{4D}{\pi(L_{0})^{2}}},\quad
p
=1/2,\quad
C_{0}
=\frac{(L_{0})^{2}}{4D},\label{sc3}\\
B
&=\frac{R_{1}}{\|X(0)\|}\sqrt{\frac{4D}{\pi(L_{1})^{2}}},\quad
q=p=1/2,\quad
C_{1}
=\frac{(L_{1})^{2}}{4D}.\label{sc4}
\end{align}
While we do not prove \eqref{sc1}-\eqref{sc3}, their informal justification is the following. The asymptotic relations in \eqref{sc1}-\eqref{sc2} concern the behavior of searchers which hit a target at an early time, and such searchers tend to follow the shortest path, which in this case is a straight line \cite{varadhan1967}. Since the straight line path from $X(0)$ to $V_{0}$ does not intersect $V_{1}$, we expect that \eqref{sc1} and \eqref{sc3} would be unchanged if $V_{0}$ was the only target. This idea is often called the ``principle of not feeling the boundary'' \cite{van1989, hsu1995}. In the case that $V_{0}$ is indeed the only target, we can solve for the distribution of $\tau$ exactly and show that it satisfies \eqref{sc1} and \eqref{sc3} (see the Appendix). Similarly, if $V_{1}$ is the only target, then we can show that the distribution of $\tau$ satisfies \eqref{sc2} and \eqref{sc4} (see the Appendix). 

Assuming \eqref{sc1}-\eqref{sc4}, then Theorem~\ref{detailprob} implies that
\begin{align}\label{3das}
&\P(K_{N}=1)
\sim
{{\eta}}
(\ln N)^{(\beta-1)/2}
N^{1-{\beta}}
\quad\text{as }N\to\infty,
\end{align}
where 
\begin{align*}
{\beta}
&=\Big(\frac{L_{1}}{L_{0}}\Big)^{2}
=\Big(\frac{R_{1}-\|X(0)\|}{\|X(0)\|-R_{0}}\Big)^{2}
>1,\\
{{\eta}}
&=\frac{R_{1}}{R_{0}}\|X(0)\|^{\beta-1}\eta_{0}
>0,
\end{align*}
where $\eta_{0}:=\sqrt{\beta\pi^{\beta-1}}\Gamma(\beta)$ is the constant prefactor computed for the example in section~\ref{ex1d}. Note that $\eta\to\eta_{0}$ if we take $R_{0}$, $R_{1}$, and $\|X(0)\|$ to infinity while keeping $L_{0}$ and $L_{1}$ fixed, which is to be expected since target curvature becomes irrelevant in this limit and the problem becomes one-dimensional.

In Figure~\ref{fig3d}, we compare \eqref{3das} to numerical simulations for the starting position $\|X(0)\|=R_{1}/2>R_{0}$. In the left panel, the solid curves are the asymptotic formula in \eqref{3das} and the square markers are computed using numerical integration of the representation for $\P(K_{N}=1)$ given in Proposition~\ref{pint} (see the Appendix for details of the numerical method). The right panel plots the relative error.

For this example, it is straightforward to compute the probability that a given single searcher starting at $\|X(0)\|\in(R_{0},R_{1})$ hits $V_{1}$ before $V_{0}$ \cite{oksendal2003},
\begin{align}\label{singleconc}
\P(K_{1}=1)
=\frac{R_{1}}{R_{1}-R_{0}}\frac{\|X(0)\|-R_{0}}{\|X(0)\|}.
\end{align}
For the values of $R_{0}$ plotted in Figure~\ref{fig1ddrift} and the starting radius $\|X(0)\|=R_{1}/2$, equation~\eqref{singleconc} implies that a given single searcher is actually more likely to hit $V_{1}$ before $V_{0}$ (i.e.\ $\P(K_{1}=1)>1/2$), despite the fact that the fastest searcher only rarely hits $V_{1}$ before $V_{0}$ if $N$ is large.

\begin{figure}
  \centering
             \includegraphics[width=1\textwidth]{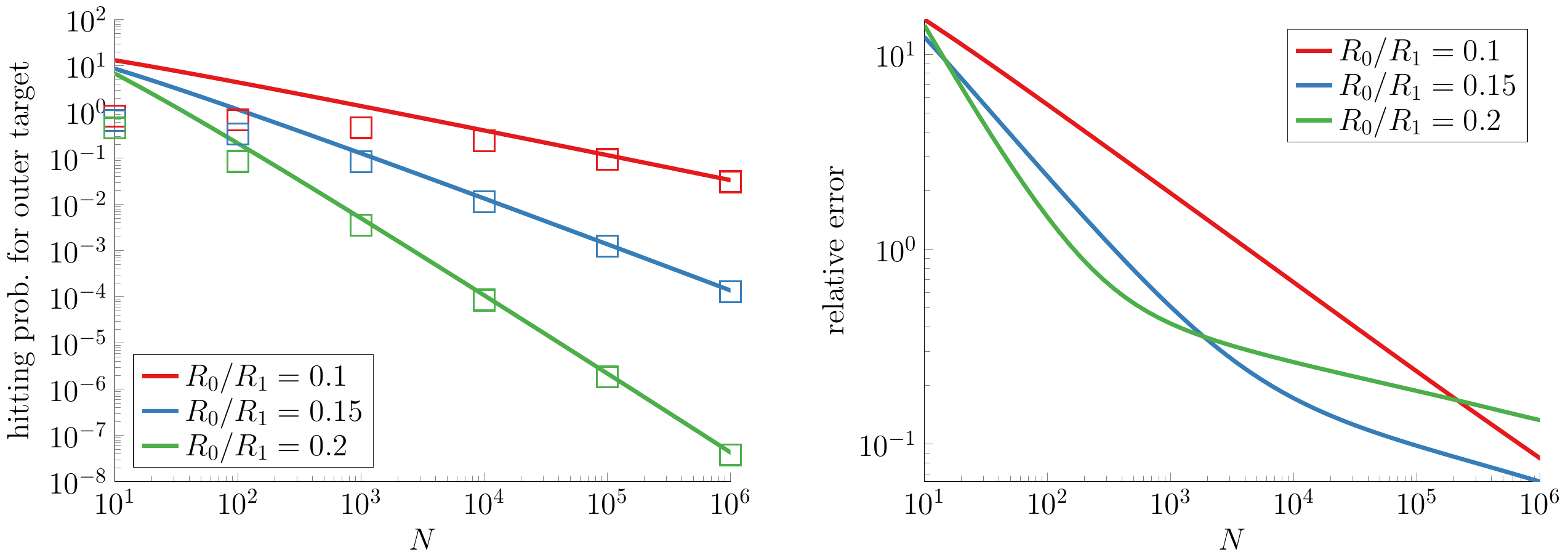}
 \caption{Extreme hitting probabilities for concentric targets in three dimensions. See section~\ref{3d} for details.}
 \label{fig3d}
\end{figure}

\subsection{Narrow capture in three dimensions}\label{narrow}

Consider pure diffusion with diffusivity $D>0$ in a bounded three-dimensional domain $M\subset\R^{3}$ with a reflecting boundary. Suppose there are $m\ge2$ small spherical targets centered at the $m$ distinct points $v_{0},\dots,v_{m-1}\in \textup{int}(M)$ with respective radii $\eps r_{0},\dots,\eps r_{m-1}>0$ for some $\eps>0$ ($\textup{int}(M)$ denotes the interior of $M$). That is, the targets are
\begin{align*}
V_{k}
:=\{x\in\R^{3}:\|x-v_{k}\|\le \eps r_{k}\},\quad k\in\{0,\dots,m-1\}.
\end{align*}
This problem is often called the narrow capture problem \cite{kaye2020}, and one studies the statistics of a single searcher in the small target limit, $\eps\to0$. See the right panel of Figure~\ref{fig3dschem} for an illustration.

Assume the searchers start at $x_{0}\notin \cup_{k=0}^{m-1}V_{k}$ and that the three points $x_{0}$, $v_{k}$, and $v_{j}$ are not collinear for any $k\neq j$. Assume that $\eps$ is sufficiently small so that (i) $V_{k}\subset M$ for each $k\in\{0,\dots,m-1\}$ and (ii)
\begin{align}\label{notcollinear}
p x_{0}+(1-p)v_{k}\notin V_{j}\quad\text{for all }p\in[0,1],\,k\neq j.
\end{align}
The assumption in~\eqref{notcollinear} ensures that the straight line path from $x_{0}$ to $V_{k}$ does not intersect any other target. Assume further that the shortest path from $x_{0}$ to each target $V_{k}$ lies entirely in the interior of $M$, 
\begin{align}\label{notboundary}
p x_{0}+(1-p)v_{k}\in \textup{int}(M)\quad\text{for all }p\in[0,1],\,k\in\{0,\dots,m-1\}.
\end{align}
Assume there is a unique closest target to $x_{0}$ and without loss of generality assume it is $V_{0}$. That is, assume 
\begin{align*}
0<L_{0}
:=\|x_{0}-v_{0}\|-\eps r_{0}
<\|x_{0}-v_{k}\|-\eps r_{k}
=:L_{k}
\quad\text{for all }k\in\{1,\dots,m-1\}.
\end{align*}

Under these assumptions, we conjecture that for $k\in\{1,\dots,m-1\}$,
\begin{align}\label{nc}
F(t)
&\sim At^{p}e^{-C_{0}/t}\quad\text{as }t\to0+,\\
F_{k}(t)
&\sim Bt^{q}e^{-C_{k}/t}\quad\text{as }t\to0+,
\end{align}
where
\begin{align}
A
&=\frac{\eps r_{0}}{\|x_{0}-v_{0}\|}\sqrt{\frac{4D}{\pi(L_{0})^{2}}},\quad
p=\frac{1}{2},\quad
C_{0}
=\frac{(L_{0})^{2}}{4D}\\
B
&=\frac{\eps r_{k}}{\|x_{0}-v_{k}\|}\sqrt{\frac{4D}{\pi(L_{k})^{2}}},\quad
q=p=\frac{1}{2},\quad
C_{k}
=\frac{(L_{k})^{2}}{4D}.\label{nc2}
\end{align}
We do not prove \eqref{nc}-\eqref{nc2}, but their derivation is analogous to the derivation of \eqref{sc1} and \eqref{sc3} (i.e.\ one finds the short-time asymptotics of $F_{k}$ assuming $V_{k}$ is the only target). 

Assuming \eqref{nc}-\eqref{nc2}, Theorem~\ref{detailprob} implies that for $k\in\{1,\dots,m-1\}$,
\begin{align}\label{ck}
&\P(K_{N}=k)
\sim
{{\eta}}
(\ln N)^{(\beta-1)/2}
N^{1-{\beta}}
\quad\text{as }N\to\infty,
\end{align}
where 
\begin{align*}
{\beta}
&=\Big(\frac{L_{k}}{L_{0}}\Big)^{2}
=\Big(\frac{\|x_{0}-v_{k}\|-\eps r_{k}}{\|x_{0}-v_{0}\|-\eps r_{0}}\Big)^{2}
>1,\\
{{\eta}}
&=\frac{\eps r_{k}}{\|x_{0}-v_{k}\|}\Big(\frac{\eps r_{0}}{\|x_{0}-v_{0}\|}\Big)^{\beta}\eta_{0}
>0,
\end{align*}
where $\eta_{0}:=\sqrt{\beta\pi^{\beta-1}}\Gamma(\beta)$ is the constant prefactor computed for the one-dimensional example in section~\ref{ex1d}.

It is interesting to contrast \eqref{ck} with the behavior of $\P(K_{N}=k)$ in the small target limit, $\eps\to0$. In the case of a single searcher searching for small spherical targets, the probability it hits a particular target is merely the ratio of the target radii \cite{cheviakov11},
\begin{align}\label{prevr}
\P(K_{1}=k)
\to\frac{r_{k}}{\sum_{j=0}^{m-1}r_{j}}\quad\text{as }\eps\to0.
\end{align}
The intuitive reason for \eqref{prevr} is that in the small target limit (i.e.\ $\eps\to0$), the searcher wanders around the entire domain before finding a target and thus the probability it hits any particular target depends merely on the target sizes. In particular, notice that the limit in \eqref{prevr} is independent of the starting location $x_{0}$ (assuming $x_{0}$ is outside an order $\eps$ neighborhood of each target, which is true if $x_{0}$ is fixed and $\eps\to0$). We conjecture that the limit in \eqref{prevr} actually holds for any fixed $N\ge2$, 
\begin{align}\label{nr}
\P(K_{N}=k)
\to\frac{r_{k}}{\sum_{j=0}^{m-1}r_{j}}\quad\text{as }\eps\to0.
\end{align}
The intuitive reasoning behind \eqref{nr} is the same as \eqref{prevr}. Namely, in the small target limit, even the fastest searcher wanders around the entire domain before finding the target.

Therefore, the many searcher limit $N\to\infty$ and the small target limit $\eps\to0$ constitute competing limits. It would be interesting to understand the crossover regime between small $\eps$ and large $N$. An analysis of similar competing limits between many searchers and small targets was carried out for extreme FPTs in \cite{madrid2020comp}.

\section{General diffusion processes}\label{general}

In the examples above, we used Theorem~\ref{detailprob} to calculate the exact asymptotics of the distribution of $K_{N}$ as $N\to\infty$. We were able to find these exact asymptotics because the specifics of the examples allowed us to obtain the detailed short-time behavior of $F(t)$ and $F_{k}(t)$.

In the case of more complicated geometries or more complicated diffusion processes, this detailed short-time behavior of $F(t)$ and $F_{k}(t)$ is not available. However, we are able to obtain bounds on the short-time behavior of $F(t)$ and $F_{k}(t)$ on a logarithmic scale in significant generality. In particular, under very general assumptions, it is known that
\begin{align}
\lim_{t\to0+}t\ln F(t)
&=-\frac{(L_{0})^{2}}{4D}<0,\label{lg1}\\
\lim_{t\to0+}t\ln F_{k}(t)
&\le-\frac{(L_{k})^{2}}{4D}<0,\quad k\in\{1,\dots,m-1\},\label{lg2}
\end{align}
where $L_{k}>L_{0}>0$ are certain geodesic distances from the set of starting locations to the targets. Hence, we can apply Theorem~\ref{logprob} to obtain an upper bound on the asymptotics of the distribution of $K_{N}$. In particular, Theorem~\ref{logprob} implies that for any $\eps>0$,
\begin{align}\label{ubg}
\P(K_{N}=k)
=o(N^{1-(L_{k}/L_{0})^{2}+\eps})\quad\text{as }N\to\infty.
\end{align}

The point of this section is to show some of the general scenarios in which we can conclude that \eqref{ubg} holds because \eqref{lg1}-\eqref{lg2} hold and to show the values of the geodesic lengths $L_{0}$ and $L_{k}$. Our approach in this section adapts the analysis in \cite{lawley2020uni, lawley2021mortal} which established \eqref{lg1} in order to study extreme FPTs.

\subsection{Setup}

Let $\{X(t)\}_{t\ge0}$ be a ${{d}}$-dimensional diffusion process (i.e.\ the ``searcher'') on a manifold $M$ that contains $m\ge2$ pairwise disjoint ``targets'' denoted by $V_{0},\dots,V_{m-1}$. For each $k\in\{0,\dots,m-1\}$, assume $V_{k}\subset M$ is the closure of its interior which precludes trivial cases such as a target being a single point. Assume the initial distribution of $X$ has compact support $U_{0}\subset M$ that does not intersect any target,
\begin{align}\label{away}
U_{0}\cap V_{k}=\varnothing,\quad\text{for each }k\in\{0,\dots,m-1\}.
\end{align}

Suppose we are given a distance function between points in $M$,
\begin{align}\label{gl}
L:M\times M\to[0,\infty).
\end{align}
Let $L_{k}$ denote the shortest distance from the starting locations $U_{0}$ to the $k$th target $V_{k}\subset M$,
\begin{align}\label{dset0}
L_{k}
=\inf_{x_{0}\in U_{0},{{x}}\in V_{k}}L({x_{0}},{{x}})>0,\quad k\in\{0,\dots,m-1\}.
\end{align}
Assume that there is a unique closest target, which we take to be $V_{0}$ without loss of generality. That is, assume
\begin{align*}
0<L_{0}<L_{k}\quad\text{for all }k\in\{1,\dots,m-1\}.
\end{align*}

Let $\tau^{(k)}$ denote the FPT to the $k$th target,
\begin{align}\label{tauk9}
\tau^{(k)}
:=\inf\{t>0:X(t)\in V_{k}\}, 
\end{align}
and let $\tau$ denote the FPT to any of the targets,
\begin{align}\label{tau}
\tau
:=\min_{k\in\{0,\dots,m-1\}}\tau^{(k)}
=\inf\{t>0:X(t)\in \cup_{k=0}^{m-1}V_{k}\}.
\end{align}
Hence, $\tau\le\tau^{(k)}$, and therefore
\begin{align}\label{bound1}
F(t)
&:=\P(\tau\le t)
\ge\P(\tau^{(k)}\le t)\quad\text{for any }k\in\{0,\dots,m-1\}.
\end{align}
Furthermore, $\tau=\tau^{(k)}$ if $\kappa=k$ (recall from section~\ref{math} that $\kappa\in\{0,\dots,m-1\}$ denotes the index of the target hit by the searcher), and therefore
\begin{align}\label{bound2}
F_{k}(t)
&:=\P(\tau\le t\cap \kappa=k)
=\P(\tau^{(k)}\le t\cap \kappa=k)
\le\P(\tau^{(k)}\le t).
\end{align}
In the examples below, we show that
\begin{align}\label{suff}
\lim_{t\to0+}t\ln\P(\tau^{(k)}\le t)
&=-\frac{(L_{k})^{2}}{4D}<0,
\end{align}
for an appropriately chosen distance function $L$ in \eqref{gl}. Therefore, once \eqref{suff} is established, Theorem~\ref{logprob} and the bounds in \eqref{bound1}-\eqref{bound2} yield \eqref{ubg}.

\subsection{Pure diffusion in $\R^{d}$}

Consider first the case of pure diffusion in $M=\R^{{d}}$ with diffusivity $D>0$. It was shown in \cite{lawley2020uni} that \eqref{suff} holds with the distance function in \eqref{gl} given by the standard Euclidean length, $L=\deuc$,
\begin{align}\label{euc}
\deuc({x_{0}},{{x}})
:=\|x_{0}-x\|,\quad x_{0},x\in\R^{d}.
\end{align}
We therefore conclude by Theorem~\ref{logprob} and \eqref{bound1}-\eqref{bound2} that \eqref{ubg} holds with the Euclidean length \eqref{euc}.

\subsection{Space-dependent diffusivity and drift in $\R^{d}$}

Rather than pure diffusion, assume the searcher moves according to the following It\^{o} stochastic differential equation on $M=\R^{{d}}$,
\begin{align}\label{sde}
\begin{split}
\dd X
&={{{\mu}}}(X)\,\dd t+\sqrt{2D}{\sigma}(X)\,\dd W,
\end{split}
\end{align}
where ${{{\mu}}}:\R^{{d}}\to\R^{{d}}$ is a space-dependent drift that describes any deterministic forces on the searcher, $D>0$ is a characteristic diffusion coefficient, ${\sigma}:\R^{{d}}\to\R^{{{d}}\times r}$ is a dimensionless, matrix-valued function that describes any anisotropy or space-dependence in the diffusivity, and $W(t)\in\R^{r}$ is a standard Brownian motion in $r$-dimensional space. Following \cite{lawley2020uni}, we assume that $\R^{{d}}\backslash \cup_{k=0}^{m-1}V_{k}$ is bounded and we make the following technical assumptions on the coefficients in \eqref{sde}: ${{{\mu}}}$ is uniformly bounded and uniformly Holder continuous and ${\sigma}{\sigma}^{\top}$ is uniformly Holder continuous and its eigenvalues are in a finite interval $(\nu_{1},\nu_{2})$ with $\nu_{1}>0$.

For any smooth path $\omega:[0,1]\to M$, define its length, $l(\omega)$, in the following Riemannian metric which depends on the inverse of the diffusion matrix in \eqref{sde}, $a:={\sigma}{\sigma}^{\top}$,
\begin{align}\label{ll}
l(\omega)
:=\int_{0}^{1}\sqrt{\dot{\omega}^{\top}(s)a^{-1}(\omega(s))\dot{\omega}(s)}\,\dd s.
\end{align}
For any two points $x_{0},x\in\R^{d}$, define the geodesic length between the points to be the following infimum of $l(\omega)$ over all smooth paths $\omega:[0,1]\to M$ which connect $\omega(0)={x_{0}}$ to $\omega(1)={{x}}$:
\begin{align}\label{drie}
\begin{split}
\drie({x_{0}},{{x}})
&:=\inf\{l(\omega):\omega(0)=x_{0},\,\omega(1)=x\},\quad x_{0},x\in\R^{d}.
\end{split}
\end{align}
Under these assumption, Varadhan's formula \cite{varadhan1967} was used in \cite{lawley2020uni} to show that \eqref{suff} holds with distance function in \eqref{gl} given by $L=\drie$. We therefore conclude by Theorem~\ref{logprob} that \eqref{ubg} holds with the length \eqref{drie}.

We emphasize two points about this result. First, the bound in \eqref{ubg} on the decay of the extreme hitting probabilities is independent of the drift. To see this, note that the distance function $\drie$ in \eqref{drie} does not depend on the drift $\mu(X)$ in \eqref{sde}. Hence, the target distances $L_{0}$ and $L_{k}$ appearing in the bound in \eqref{ubg} are computed without any consideration of the drift. This accords with the one-dimensional example with constant drift considered in section~\ref{drift}, where we found that the drift affects only the constant prefactor in the asymptotic behavior of the extreme hitting probability.

Second, the bound in \eqref{ubg} on the decay of the extreme hitting probabilities does depend on $\sigma(X)$, which describes the space-dependence or anisotropy in the diffusion. In particular, notice that the length function $l(\omega)$ in \eqref{ll} penalizes paths which traverse regions of slow diffusivity. Hence, the distances $L_{0}$ and $L_{k}$ appearing in the bound in \eqref{ubg} are the lengths of the shortest paths to the targets which avoid regions of slow diffusivity.

\subsection{Diffusion on a manifold with reflecting obstacles}\label{manifold}

Assume $M$ is a ${{d}}$-dimensional smooth Riemannian manifold. As one example, $M$ could be a set in $\R^{d}$ with smooth boundaries which model reflecting obstacles, as illustrated in Figure~\ref{figschem0}. Assume $\{X(t)\}_{t\ge0}$ is a diffusion on $M$ which is described by its generator $\L$, which in each coordinate chart is a second order differential operator of the following form
\begin{align*}
\L f
=D\sum_{i,j=1}^{n}\frac{\partial}{\partial x_{i}}\bigg(a_{ij}(x)\frac{\partial f}{\partial x_{j}}\bigg),
\end{align*}
where $a=\{a_{ij}\}_{i,j=1}^{n}$ satisfies some mild technical conditions (namely, in each coordinate chart, assume $a$ is continuous, symmetric, and that its eigenvalues are in a finite interval $(\nu_{1},\nu_{2})$ with $\nu_{1}>0$). Assume $M$ is connected and compact and assume that $X$ reflects from the boundary of $M$ if $M$ has a boundary.

Relying on the results of \cite{norris1997}, it was shown in \cite{lawley2020uni} that \eqref{suff} holds with distance function in \eqref{gl} given by $L=\drie$ in \eqref{drie}. We therefore again conclude by Theorem~\ref{logprob} that \eqref{ubg} holds with the length \eqref{drie} for this example. For the example of diffusion in the presence of reflecting obstacles as illustrated in Figure~\ref{figschem0}, we point out that the lengths $L_{0}$ and $L_{k}$ in the bound in \eqref{ubg} are the lengths of the shortest paths to the targets which go around the obstacles (recall that the infimum in \eqref{drie} is taken over paths $\omega$ lying in $M$, and therefore paths $\omega$ that intersect obstacles are prohibited).

\subsection{Partially absorbing targets}

In section~\ref{partial}, we considered partially absorbing targets in a one-dimensional example. We now consider partially absorbing targets in a more general setting. Specifically, consider pure diffusion with diffusivity $D>0$ in a smooth bounded domain in $\R^{d}$ where the target is any finite disjoint union of hyperspheres. Let $\tau_{\textup{partial}}^{(k)}$ be the FPT for the searcher to be absorbed at $V_{k}$ in the case that $V_{k}$ is partially absorbing,
\begin{align*}
\tau_{\textup{partial}}^{(k)}
:=\inf\{t>0:\lambda_{k}(t)>\xi_{k}/\gamma_{k}\}, \quad k\in\{0,\dots,m-1\},
\end{align*}
where $\lambda_{k}(t)$ is the local time of $X(t)$ on $V_{k}$, $\xi_{k}$ is an independent unit rate exponential random variable, and $\gamma_{k}>0$ is a given parameter (the so-called ``trapping rate'' of the $k$th target \cite{Berezhkovskii2004, lawley2019boundary}). In this case, it is known that \cite{lawley2020uni}
\begin{align*}
\lim_{t\to0+}t\ln\P(\tau_{\textup{partial}}^{(k)}\le t)
=\lim_{t\to0+}t\ln\P(\tau^{(k)}\le t)
=-\frac{(L_{k})^{2}}{4D}<0,
\end{align*}
where the distance function~\eqref{gl} is the standard Euclidean distance in \eqref{euc}.

We therefore conclude by Theorem~\ref{logprob} that \eqref{ubg} holds with the length \eqref{drie}. In particular, the fact that the targets are partially absorbing rather than perfectly absorbing has no effect on the bound in \eqref{ubg}. This result accords with the one-dimensional example in section~\ref{partial}, where we found that making the targets partially absorbing affects only the constant prefactor and the logarithmic power in the asymptotic behavior of the extreme hitting probability.

\section{Discussion}

In this paper, we studied extreme hitting probabilities for diffusive search in the many searcher limit. Our results yield the exact asymptotics of these extreme hitting probabilities in terms of the short-time asymptotics of the hitting time of a single searcher. We illustrated these results in several examples and numerical simulations. We also proved a general bound on the extreme hitting probabilities in terms of the distances that the searcher must travel to hit the targets.

To our knowledge, the only other work that considers what we call extreme hitting probabilities is the very interesting 2015 work of Krapivsky, Majumdar, Rosso \cite{krapivsky2010}. These authors consider $N$ purely diffusive searchers on the positive real line and study the tail of the probability distribution of the position of the searcher farthest from the origin at the time when the first searcher hits the origin. Their approach involves computing the extreme hitting probabilities for the example we considered in section~\ref{ex1d}, with the additional complication that the $N$ searchers can start at $N$ specified locations. Using that the distribution of the position of a single searcher can be written as an infinite series, the authors provide an exact representation for the extreme hitting probabilities in terms of $N$ nested infinite summations. It is not clear to us how to derive the large $N$ behavior of the extreme hitting probabilities from their novel representation.

The present work is related to several recent studies of extreme FPTs, which is the time it takes the fastest searcher to find a target out of many searchers. Extreme FPTs for diffusive search were first studied in 1983 by Weiss, Shuler, and Lindenberg \cite{weiss1983}. Driven primarily by applications to cell biology, extreme FPTs for diffusion have been recently studied by several groups of authors \cite{dybiec_escape_2015, lawley2020esp1, lawley2020uni, lawley2020dist, madrid2020comp, ro2017, clementi2020, basnayake2019, schuss2019 ,godec2016x, hartich2018, hartich2019}. Extreme FPTs for other types of search processes (i.e.\ non-diffusive) were considered in \cite{lawley2020sub, weng2017, feinerman2012, lawley2020networks, lawley2021pdmp, lawley2021super}.

In closing, the present work highlights how the behavior of a given single searcher is vastly different than the behavior of the fastest searcher out of many searchers. Furthermore, we have shown that analyzing the fastest searcher can in fact be much simpler than analyzing a single searcher. Indeed, details of the problem which are critical for a single searcher (domain size, domain geometry, spatial dimension, drift, etc.)\ are irrelevant for the fastest searcher, as the extreme hitting probabilities are primarily determined simply by the target distances. Moreover, while the behavior of a single searcher may be essentially unpredictable (the searcher could be equally likely to hit each of the $m\ge2$ targets), the fastest searcher becomes effectively deterministic for many searchers, as it hits the closest target with high probability.

\section{Appendix}

In this appendix, we first give the proofs of the propositions and theorems and then give details on the numerical methods.

\subsection{Proofs}\label{proofs}

\begin{proof}[Proof of Proposition~\ref{pint}]
The result $\P(K_{N}=\infty)=(\P(\tau=\infty))^{N}$ is immediate. Let $k\in\{0,\dots,m-1\}$. Since $\{(\tau_{n},\kappa_{n})\}_{n\ge1}$ are identically distributed, we have that
\begin{align}\label{start0}
\begin{split}
\P(K_{N}=k)
=\sum_{n=1}^{N}\P(\tau_{n}=T_{N}\cap\kappa_{n}=k)
&=N\P(\tau_{N}=T_{N}\cap\kappa_{N}=k)\\
&=N\P(\tau_{N}<T_{N-1}\cap\kappa_{N}=k),
\end{split}
\end{align}
where $T_{N-1}:=\min\{\tau_{1},\dots,\tau_{N-1}\}$. 
If we define
\begin{align*}
\tau_{N}^{(k)}
=\begin{cases}
\tau_{N} & \text{if }\kappa_{N}=k,\\
+\infty & \text{if }\kappa_{N}\neq k,
\end{cases}
\end{align*}
then $\P(\tau_{N}<T_{N-1}\cap\kappa_{N}=k)=\P(\tau_{N}^{(k)}<T_{N-1})$ and so \eqref{start0} can be written as
\begin{align}\label{ff9}
\P(K_{N}=k)
=N\P(\tau_{N}^{(k)}<T_{N-1}).
\end{align}

Since $\{(\tau_{n},\kappa_{n})\}_{n\ge1}$ are iid, the survival probability of $T_{N-1}$ is
\begin{align}\label{d1}
\P(T_{N-1}>t)
=(1-F(t))^{N-1},\quad t\in\R.
\end{align}
Further, the cumulative distribution function of $\tau_{N}^{(k)}$ is
\begin{align}\label{d2}
\P(\tau_{N}^{(k)}\le t)
=F_{k}(t),\quad t\in\R.
\end{align}
Now, if $X$ and $Y$ are independent random variables with $F_{X}(x):=\P(X\le x)$ and $S_{Y}(y):=\P(Y> y)$, then
\begin{align}\label{XY}
\P(X<Y)
=\E[S_{Y}(X)]
=\int_{-\infty}^{\infty}
S_{Y}(x)\,\dd F_{X}(x).
\end{align}
Combining \eqref{XY} with \eqref{ff9}-\eqref{d2} completes the proof.
\end{proof}
\begin{proof}[Proof of Proposition~\ref{p1}]
Let $\delta_{0}>0$ be such that $At^{p}e^{-C/t}<1$ and $At^{p}e^{-C/t}$ is monotonically increasing for all $t\in(0,\delta_{0}]$. Let $\delta\in(0,\delta_{0}]$, and observe that
\begin{align*}
\int_{\delta}^{\delta_{0}}
t^{q-2}e^{-C_{+}/t}\big(1-At^{p}e^{-C/t}\big)^{N-1}\,\dd t
\le\big(1-A\delta^{p}e^{-C/\delta}\big)^{N-1}
\int_{\delta}^{\delta_{0}}
t^{q-2}e^{-C_{+}/t}\,\dd t.
\end{align*}
Hence, 
\begin{align*}
I_{0,\delta}:=\int_{0}^{\delta}
t^{q-2}e^{-C_{+}/t}\big(1-At^{p}e^{-C/t}\big)^{N-1}\,\dd t
\sim I_{0,\delta_{0}}\quad\text{as }N\to\infty,
\end{align*}
as long as $I_{0,\delta}$ vanishes slower than exponentially fast as $N\to\infty$, which we prove below. The upshot is that the large $N$ behavior of $I_{0,\delta}$ is independent of $\delta$.

Changing variables $t'=t/C$ yields
\begin{align*}
I_{0,\delta}
&=\int_{0}^{\delta}t^{q-2}e^{-C_{+}/t}\big(1-At^{p}e^{-C/t}\big)^{N-1}\,\dd t\\
&=C\int_{0}^{\delta/C}C^{q-2}(t')^{q-2}e^{-(C_{+}/C)/t'}\big(1-AC^{p}(t')^{p}e^{-1/t'}\big)^{N-1}\,\dd t'.
\end{align*}
Hence, if we let 
\begin{align*}
{\beta}
:=C_{+}/C>1,
\quad
A':=AC^{p}>0,
\quad
\delta'
:=\delta/C,
\end{align*}
then it suffices to study
\begin{align*}
I'
:=\frac{I_{0,\delta}}{C^{q-1}}
=\int_{0}^{\delta'}t^{q-2}e^{-{\beta}/t}\big(1-A't^{p}e^{-1/t}\big)^{N-1}\,\dd t.
\end{align*}

It is straightforward to verify that
\begin{align}\label{logbounds}
-x(1+x)
\le\ln(1-x)
\le-x,\quad \text{for all }x\in[0,1/2].
\end{align}
Since we may write $I'$ in the form,
\begin{align*}
I'
=&\int_{0}^{\delta'}t^{q-2}\exp\big(-{\beta}/t+(N-1)\ln(1-A't^{p}e^{-1/t})\big)\,\dd t,
\end{align*}
taking $\delta$ sufficiently small so that $A't^{p}e^{-1/t}\le1/2$ for all $t\in(0,\delta']$ and using \eqref{logbounds} yields the bounds
\begin{align}\label{Iplus}
\begin{split}
I_{-}:=&\int_{0}^{\delta'}t^{q-2}\exp\Big(-{\beta}/t-(N-1)A't^{p}{e^{-1/t}}\big(1+A't^{p}{e^{-1/t}}\big)\Big)\,\dd t
\le
I'\\
&\quad\le\int_{0}^{\delta'}t^{q-2}\exp\big(-{\beta}/t-(N-1)A't^{p}{e^{-1/t}}\big)\,\dd t=:I_{+}(A').
\end{split}
\end{align}
Furthermore, since $At^{p}e^{-1/t}$ is monotonically increasing for $t\in(0,\delta']$, we have the lower bound
\begin{align}\label{Iminus}
\begin{split}
I_{-}
\ge\int_{0}^{\delta'}t^{q-2}\exp\Big(-{\beta}/t-(N-1)A't^{p}{e^{-1/t}}\big(1+A'(\delta')^{p}{e^{-1/\delta'}}\big)\Big)\,\dd t\\
=I_{+}\big(A'(1+A'(\delta')^{p}{e^{-1/\delta'}})\big),
\end{split}
\end{align}
where $I_{+}(\cdot)$ is defined in \eqref{Iplus}. 

To study $I_{+}(A_{0})$ for an arbitrary $A_{0}>0$, we change the integration variable to
\begin{align*}
u
=t^{p}e^{-1/t}.
\end{align*}
We can invert this equation to write $t$ in terms of $u$ as
\begin{align*}
t
=\frac{1}{g(u)}
:=\begin{cases}
(pW_{0}(p^{-1}u^{-1/p}))^{-1} & \text{if }p>0,\\
(pW_{-1}(p^{-1}u^{-1/p}))^{-1} & \text{if }p<0,\\
(\ln(u^{-1}))^{-1} & \text{if }p=0,
\end{cases}
\end{align*}
where $W_{0}(z)$ denotes the principal branch of the LambertW function and $W_{-1}(z)$ denotes the lower branch \cite{corless1996}. Therefore,
\begin{align*}
\dd u
=u(pt^{-1}+t^{-2})\,\dd t
=u\big(pg(u)+(g(u))^{2}\big)\,\dd t,
\end{align*}
and
\begin{align*}
I_{+}(A_{0})
&=\int_{0}^{\delta'}t^{q-2}(e^{-1/t})^{{\beta}}\exp\big(-(N-1)A_{0}t^{p}{e^{-1/t}}\big)\,\dd t\\
&=\int_{0}^{\delta''}
h(u)
\exp(-(N-1)A_{0}u)
\,\dd u,
\end{align*}
where we have set
\begin{align*}
h(u):=\big(g(u)\big)^{p{\beta}-q}\frac{g(u)}
{p+g(u)}
u^{\beta-1},\quad
\delta'':=(\delta')^{p}e^{-1/\delta'}.
\end{align*}

Using standard results on the asymptotics of the LambertW function \cite{corless1996}, it is straightforward to check that $h(u)$ has the following logarithmic singularity at the origin,
\begin{align}\label{logsing}
h(u)
\sim u^{\beta-1}(\ln(u^{-1}))^{p\beta-q}\quad\text{as }u\to0+.
\end{align}
We can thus apply Theorem~5 in \cite{bleistein1977}, which generalizes Watson's lemma to functions with logarithmic singularities of the form \eqref{logsing}, to conclude that
\begin{align}\label{A0}
I_{+}(A_{0})
\sim (A_{0})^{-\beta}\Gamma(\beta)N^{-\beta}(\ln N)^{p\beta-q}\quad\text{as }N\to\infty.
\end{align}

Therefore, combining \eqref{A0} with the bounds in \eqref{Iplus}-\eqref{Iminus} yields
\begin{align}\label{infsup}
\begin{split}
\big(1+A'(\delta')^{p}{e^{-1/\delta'}}\big)^{-\beta}
&\le
\liminf_{N\to\infty}\frac{I'}{(A')^{-\beta}\Gamma(\beta)N^{-\beta}(\ln N)^{p\beta-q}}\\
&\le\limsup_{N\to\infty}\frac{I'}{(A')^{-\beta}\Gamma(\beta)N^{-\beta}(\ln N)^{p\beta-q}}
\le1
\end{split}
\end{align}
Since the lower bound in \eqref{infsup} can be made arbitrarily close to unity be taking $\delta'$ small, and since the large $N$ behavior of $I'$ is independent of $\delta'\in(0,\delta_{0}]$, we conclude that
\begin{align*}
I'
\sim (A')^{-\beta}\Gamma(\beta)N^{-\beta}(\ln N)^{p\beta-q}\quad\text{as }N\to\infty.
\end{align*}
Recalling the relation $I_{0,\delta}=C^{q-1}I'$ and $A'=AC^{p}$ completes the proof.
\end{proof}

\begin{proof}[Proof of Theorem~\ref{detailprob}]
Define
\begin{align*}
I_{a,b}
:=\int_{a}^{b}\big(1-F(t)\big)^{N-1}\,\dd F_{1}(t).
\end{align*}
Let $\eps\in(0,1)$. By the assumptions in \eqref{s1}-\eqref{s2}, there exists a $\delta>0$ so that
\begin{align}
A_{-\eps}t^{p}e^{-C_{0}/t}
&\le F(t)
\le A_{+\eps}t^{p}e^{-C_{0}/t} \quad \text{for all }t\in(0,\delta),\label{lb199}\\
B_{-\eps}t^{q}e^{-C_{k}/t}
&\le F_{1}(t)
\le B_{+\eps}t^{q}e^{-C_{k}/t} \quad \text{for all }t\in(0,\delta),\label{lb299}
\end{align}
where $A_{\pm\eps}:=A(1\pm\eps)$ and $B_{\pm\eps}:=B(1\pm\eps)$. 
Using \eqref{lb199} and integrating by parts yields
\begin{align}\label{ibp0}
\begin{split}
&I_{0,\delta}
\le \int_{0}^{\delta}\big(1-A_{-\eps}t^{p}e^{-C_{0}/t}\big)^{N-1}\,\dd F_{1}(t)\\
&= F_{1}(\delta)\big(1-A_{-\eps}\delta^{p}e^{-C_{0}/\delta}\big)^{N-1}\\
&
+ (N-1)\int_{0}^{\delta}(pt^{-1}+C_{0}t^{-2})A_{-\eps}t^{p}e^{-C_{0}/t}F_{1}(t)\big(1-A_{-\eps}t^{p}e^{-C_{0}/t}\big)^{N-2}\,\dd t.
\end{split}
\end{align}
The first term in the righthand side of \eqref{ibp0} vanishes exponentially fast as $N\to\infty$. To bound the second term, we note that \eqref{lb299} implies that
\begin{align}
&\int_{0}^{\delta}(pt^{-1}+C_{0}t^{-2})A_{-\eps}t^{p}e^{-C_{0}/t}F_{1}(t)\big(1-A_{-\eps}t^{p}e^{-C_{0}/t}\big)^{N-2}\,\dd t\nonumber\\
&\quad\le\int_{0}^{\delta}(pt^{-1}+C_{0}t^{-2})A_{-\eps}B_{+\eps}t^{p+q}e^{-(C_{0}+C_{k})/t}\big(1-A_{-\eps}t^{p}e^{-C_{0}/t}\big)^{N-2}\,\dd t.\label{to99}
\end{align}
Using Proposition~\ref{p1} to find the large $N$ behavior of \eqref{to99} and using \eqref{ibp0} and the fact that $I_{\delta,\infty}$ vanishes exponentially fast as $N\to\infty$ yields
\begin{align*}
\limsup_{t\to\infty}\frac{I_{0,\infty}}{{{\eta}}(\ln N)^{p{\beta}-q}N^{-{\beta}}}
\le \frac{(1+\eps)}{(1-\eps)^{\beta}}.
\end{align*}
The analogous argument yields the lower bound
\begin{align*}
\liminf_{t\to\infty}\frac{I_{0,\infty}}{{{\eta}}(\ln N)^{p{\beta}-q}N^{-{\beta}}}
\ge \frac{(1-\eps)}{(1+\eps)^{\beta}}.
\end{align*}
Since $\eps\in(0,1)$ is arbitrary, and since Proposition~\ref{pint} implies that $\P(K_{N}=1)=NI_{0,\infty}$, the proof is complete.
\end{proof}


\begin{proof}[Proof of Theorem~\ref{logprob}]
Define
\begin{align*}
I_{a,b}
:=\int_{a}^{b}\big(1-F(t)\big)^{N-1}\,\dd F_{1}(t).
\end{align*}
Let $\eps>0$. By \eqref{lb}, there exists a $\delta>0$ so that
\begin{align}
F(t)
&\ge e^{-(C_{0}+\eps)/t}\quad\text{for all }t\in(0,\delta),\label{lb1}\\
F_{1}(t)
&\le e^{-(C_{k}-\eps)/t}\quad\text{for all }t\in(0,\delta).\label{lb2}
\end{align}
Using \eqref{lb1} and integrating by parts yields
\begin{align}\label{ibp055}
\begin{split}
I_{0,\delta}
&\le \int_{0}^{\delta}\big(1-e^{-(C_{0}+\eps)/t}\big)^{N-1}\,\dd F_{1}(t)\\
&= F_{1}(\delta)\big(1-e^{-(C_{0}+\eps)/\delta}\big)^{N-1}\\
&\quad
+ (N-1)\int_{0}^{\delta}(C_{0}+\eps)t^{-2}e^{-(C_{0}+\eps)/t}F_{1}(t)\big(1-e^{-(C_{0}+\eps)/t}\big)^{N-2}\,\dd t.
\end{split}
\end{align}
The first term in the righthand side of \eqref{ibp055} vanishes exponentially fast as $N\to\infty$. To handle the second term, we note that \eqref{lb2} implies that
\begin{align}
&\int_{0}^{\delta}t^{-2}e^{-(C_{0}+\eps)/t}F_{1}(t)\big(1-e^{-(C_{0}+\eps)/t}\big)^{N-2}\,\dd t\nonumber\\
&\quad\le\int_{0}^{\delta}t^{-2}e^{-(C_{0}+C_{k})/t}\big(1-e^{-(C_{0}+\eps)/t}\big)^{N-2}\,\dd t.\label{to9}
\end{align}
Applying Proposition~\ref{p1} to \eqref{to9} and using \eqref{ibp055} and the fact that $I_{\delta,\infty}$ vanishes exponentially fast as $N\to\infty$ completes the proof of \eqref{ub}.

To prove \eqref{both}, we use that \eqref{equal} ensures the existence of a $\delta'>0$ so that
\begin{align}
F(t)
&\le e^{-(C_{0}-\eps)/t}\quad\text{for all }t\in(0,\delta'),\label{lb3}\\
F_{1}(t)
&\ge e^{-(C_{k}+\eps)/t}\quad\text{for all }t\in(0,\delta').\label{lb4}
\end{align}
Using \eqref{lb3} and integrating by parts yields
\begin{align}\label{ibp1}
\begin{split}
I_{0,\delta'}
&\ge \int_{0}^{\delta'}\big(1-e^{-(C_{0}-\eps)/t}\big)^{N-1}\,\dd F_{1}(t)\\
&= F_{1}(\delta')\big(1-e^{-(C_{0}-\eps)/\delta'}\big)^{N-1}\\
&\quad
+ (N-1)\int_{0}^{\delta'}(C_{0}-\eps)t^{-2}e^{-(C_{0}-\eps)/t}F_{1}(t)\big(1-e^{-(C_{0}-\eps)/t}\big)^{N-2}\,\dd t.
\end{split}
\end{align}
The first term in the righthand side of \eqref{ibp1} vanishes exponentially fast as $N\to\infty$. To handle the second term, we note that \eqref{lb4} implies that
\begin{align}
&\int_{0}^{\delta}t^{-2}e^{-(C_{0}-\eps)/t}F_{1}(t)\big(1-e^{-(C_{0}-\eps)/t}\big)^{N-2}\,\dd t\nonumber\\
&\quad\ge\int_{0}^{\delta}t^{-2}e^{-(C_{0}+C_{k})/t}\big(1-e^{-(C_{0}-\eps)/t}\big)^{N-2}\,\dd t.\label{to10}
\end{align}
Applying Proposition~\ref{p1} to \eqref{to10} and using \eqref{ibp1} and the fact that $I_{\delta,\infty}$ vanishes exponentially fast as $N\to\infty$ completes the proof of \eqref{both}.
\end{proof}

\subsection{Numerical simulations}\label{numerical}

We now describe the numerical methods used to compute the extreme hitting probabilities in section~\ref{examples}.

\subsubsection{Pure diffusion in one dimension}

For the example of pure diffusion in $(0,l)$ considered in section~\ref{ex1d}, the probability density for hitting the right boundary is 
\begin{align}\label{f1}
f_{1}(t)
:=\frac{\dd}{\dd t}F_{1}(t)
=\frac{D}{l^{2}}\phi\Big(\frac{D}{l^{2}}t,1-\frac{x_{0}}{l}\Big),
\end{align}
where \cite{navarro2009}
\begin{align}\label{phi}
\phi(s,w)
:=
\begin{cases}
\sum_{k=1}^{\infty}\exp\big({-k^{2}\pi^{2}s}\big) 2k\pi\sin(k\pi w),\\
\frac{1}{\sqrt{4\pi s^{3}}}\sum_{k=-\infty}^{\infty}(w+2k)\exp\big(\frac{-(w+2k)^{2}}{4s}\big).
\end{cases}
\end{align}
The two representations for $\phi$ in \eqref{phi} are equivalent; the first is called the large-time expansion because it converges rapidly for large $s$ and the second is called the short-time expansion because it converges rapidly for small $s$. Integrating \eqref{f1} yields
\begin{align*}
F_{1}(t)
&=\int_{0}^{t}f_{1}(t')\,\dd t'
=\Phi\Big(\frac{D}{l^{2}}t,1-\frac{x_{0}}{l}\Big),
\end{align*}
where the large-time and short-time expansions of $\Phi$ are
\begin{align*}
\Phi(s,w)
=\int_{0}^{s}\phi(s',w)\,\dd s'
=
\begin{cases}\sum_{k=1}^{\infty}(1-e^{-k^{2}\pi^{2}s})\frac{2}{k\pi}\sin(k\pi w),\\
\sum_{k=-\infty}^{\infty}\textup{sgn}(2k+w)\textup{erfc}\Big(\frac{|2k+w|}{\sqrt{4s}}\Big),
\end{cases}
\end{align*}
and $\textup{erfc}(z):=1-\frac{2}{\sqrt{\pi}}\int_{0}^{z}e^{-u^{2}}\,\dd u$ denotes the complementary error function.

By symmetry, the probability density for hitting the left boundary is
\begin{align*}
f_{0}(t)
:=\frac{\dd}{\dd t}F_{0}(t)
=\frac{D}{l^{2}}\phi\Big(\frac{D}{l^{2}}t,\frac{x_{0}}{l}\Big),
\end{align*}
and
\begin{align*}
F_{0}(t)
&=\int_{0}^{t}f_{0}(t')\,\dd t'
=\Phi\Big(\frac{D}{l^{2}}t,\frac{x_{0}}{l}\Big).
\end{align*}
Hence,
\begin{align*}
F(t)
=F_{0}(t)
+F_{1}(t)
=\Phi\Big(\frac{D}{l^{2}}t,\frac{x_{0}}{l}\Big)
+\Phi\Big(\frac{D}{l^{2}}t,1-\frac{x_{0}}{l}\Big).
\end{align*}
Using these expressions, it is straightforward to derive the short-time behavior of $F$ and $F_{1}$ in \eqref{shortF}-\eqref{shortfarB}.

We use these formulas to numerically approximate the extreme hitting probabilities using the integral representation in Proposition~\ref{pint} and the trapezoidal rule. We use the short-time (large-time) expansions of $\phi$ and $\Phi$ for $s\le1$ ($s>1$) and using $10^{3}$ terms in these series representations. We take $D=l=1$.

\subsubsection{Diffusion with drift in one dimension}

For the example of diffusion with drift $\mu\in\R$ in $(0,l)$ considered in section~\ref{drift}, the probability density for hitting the left boundary is \cite{navarro2009}
\begin{align*}
f_{0}^{({\mu})}(t)
:=\frac{\dd}{\dd t}F_{0}(t)
=\exp\Big(-\frac{{\mu}x_{0}}{2D}-\frac{{\mu}^{2}t}{4D}\Big)f_{0}(t),
\end{align*}
and the density for hitting the right boundary is
\begin{align*}
f_{1}^{({\mu})}(t)
:=\frac{\dd}{\dd t}F_{1}(t)
=\exp\Big(\frac{{\mu}(l-x_{0})}{2D}-\frac{{\mu}^{2}t}{4D}\Big)f_{1}(t).
\end{align*}
Integrating these expressions yields
\begin{align*}
F_{0}(t)
&=\int_{0}^{t}f_{0}^{({\mu})}(t)\,\dd t
=\exp\Big(-\frac{{\mu}x_{0}}{2D}\Big)\Phi^{({\mu})}\Big(\frac{D}{l^{2}}t,\frac{x_{0}}{l}\Big),\\
F_{1}(t)
&=\int_{0}^{t}f_{1}^{({\mu})}(t)\,\dd t
=\exp\Big(\frac{{\mu}(l-x_{0})}{2D}\Big)\Phi^{({\mu})}\Big(\frac{D}{l^{2}}t,1-\frac{x_{0}}{l}\Big),
\end{align*}
where
\begin{align*}
\Phi^{({\mu})}(s,w)
=\sum_{k=1}^{\infty}\Big(1-\exp\big(-(b+k^{2}\pi^{2})s\big)\Big)\frac{2k\pi}{b+k^{2}\pi^{2}}\sin(k\pi w),
\end{align*}
and $b=\frac{l^{2}{\mu}^{2}}{4D^{2}}$. Hence,
\begin{align*}
F(t)
=F_{0}(t)
+F_{1}(t)
&=\exp\Big(-\frac{{\mu}x_{0}}{2D}\Big)\Phi^{({\mu})}\Big(\frac{D}{l^{2}}t,\frac{x_{0}}{l}\Big)\\
&\quad+\exp\Big(\frac{{\mu}(l-x_{0})}{2D}\Big)\Phi^{({\mu})}\Big(\frac{D}{l^{2}}t,1-\frac{x_{0}}{l}\Big).
\end{align*}
Using these expressions, it is straightforward to derive the short-time behavior of $F$ and $F_{1}$ in \eqref{shortF} and \eqref{shortfar} with the values in \eqref{driftA}-\eqref{driftB}.

We use these formulas to numerically approximate the extreme hitting probabilities using the integral representation in Proposition~\ref{pint} and the trapezoidal rule. We use the large-time expansions of $\phi$ and $\Phi^{(\mu)}$ for $s>1$ and the short-time expansion of $\phi$ for $s\le1$. For $s<1$, we numerically integrate (trapezoidal rule) $\Phi(\mu)(s,w)$ using the short-time expansion of $\phi$. We use $10^{3}$ terms in all these series representations. We take $D=l=1$.

\subsubsection{Partially absorbing targets}

For the example in section~\ref{partial} of partially absorbing targets, we numerically approximate the extreme hitting probabilities using the integral representation in Proposition~\ref{pint} and the trapezoidal rule. To obtain the values of $F(t)$ and $F_{1}(t)$ needed to compute these integrals, we numerically approximate the solution to the PDEs these distributions satisfy.

In particular, if we incorporate the starting position of the searcher into the definition of $F$,
\begin{align*}
F(x,t)
:=\P(\tau\le t\,|\,X(0)=x),
\end{align*}
then the initial-boundary value problem satisfied by $F$ is immediate from \eqref{ppde}-\eqref{pbcs} upon noting that $F=1-S$ where $S$ is defined in \eqref{pS}. Similarly, if we incorporate the starting position of the searcher into the definition of $F_{1}$,
\begin{align*}
F_{1}(x,t)
:=\P(\tau\le t\cap\kappa=1\,|\,X(0)=x),
\end{align*}
then $F_{1}$ satisfies that same PDE initial-boundary value problem as $F$ except that the initial-boundary conditions are
\begin{align*}
D\frac{\partial}{\partial x}F_{1}
&=\gamma_{0}F_{1},\quad x=0,\\
-D\frac{\partial}{\partial x}F_{1}
&=\gamma_{1}(1-F_{1}),\quad x=l.
\end{align*}
We approximate $F$ and $F_{1}$ by solving these PDE initial-boundary value problems using the Matlab PDE solver \texttt{pdepe} \cite{matlab}.

The conjectured short-time behavior in \eqref{shortF} and \eqref{shortfar} with parameters in \eqref{ac1}-\eqref{ac3} is derived in the following way. If we take $l\to\infty$, then it is straightforward to check that 
\begin{align*}
F(x,t)
=\text{erfc}(\frac{x}{\sqrt{4D t}})-e^{\gamma_{0}  (\gamma_{0}  t+x)/D} \text{erfc}\Big(\frac{2 \gamma_{0}  t+x}{\sqrt{4D t}}\Big),
\end{align*}
from which we can obtain \eqref{ac1}. The analogous argument yields \eqref{ac3}.

\subsubsection{Concentric targets in three dimensions}

For the example in section~\ref{3d} of diffusion between concentric spherical targets in three dimensions, we numerically approximate the extreme hitting probabilities using the integral representation in Proposition~\ref{pint} and the trapezoidal rule. To obtain the values of $F(t)$ and $F_{1}(t)$ needed to compute these integrals, we numerically approximate the solution to the PDEs these distributions satisfy.

In particular, if we incorporate the starting radial position of the searcher into the definition of $F$,
\begin{align*}
F(r,t)
:=\P(\tau\le t\,|\,\|X(0)\|=r),
\end{align*}
then it is well-known that $F(r,t)$ satisfies the diffusion equation,
\begin{align}\label{de}
\frac{\partial}{\partial t}F
=D\Big(\frac{2}{r}\frac{\partial}{\partial r}F+\frac{\partial^{2}}{\partial r^{2}}F\Big),\quad r\in(R_{0},R_{1}),
\end{align}
with zero initial condition, $F=0$ at $t=0$, and inhomogeneous boundary conditions, $F=1$ at $r\in\{R_{0},R_{1}\}$. If we similarly incorporate the starting radial position of the searcher into the definition of $F_{1}$,
\begin{align*}
F_{1}(r,t)
:=\P(\tau\le t\cap\kappa=1\,|\,\|X(0)\|=r),
\end{align*}
then $F_{1}$ also satisfies \eqref{de} with zero initial condition. The difference is that $F_{1}$ satisfies the boundary conditions $F_{1}=0$ at $r=R_{0}$ and $F_{1}=1$ are $r=R_{1}$. We approximate $F$ and $F_{1}$ by solving these PDE initial-boundary value problems using the Matlab PDE solver \texttt{pdepe} \cite{matlab}.

The conjectured short-time behavior in \eqref{sc1}-\eqref{sc4} is derived in the following way. If we take $R_{1}\to\infty$, then it is straightforward to check that 
\begin{align*}
F(r,t)
=\frac{R_{0}}{r}\,\textup{erfc}\Big(\frac{r-R_{0}}{\sqrt{4Dt}}\Big),
\end{align*}
from which we can obtain the short-time behavior in \eqref{sc1} and \eqref{sc3}. Similarly, if we take $R_{0}\to0$, then the short-time behavior of $F_{1}$ given in \eqref{sc2} and \eqref{sc4} is well-known and can be found in, for example, \cite{grebenkov2010subdiffusion}.

\bibliography{library.bib}
\bibliographystyle{unsrt}

\end{document}